\numberwithin{equation}{section}
\newtheorem{theorem}{Theorem}[section]
\newtheorem{corollary}[theorem]{Corollary}
\newtheorem{lemma}[theorem]{Lemma}
\newtheorem{proposition}[theorem]{Proposition}
\newtheorem{definition}[theorem]{Definition}
\newtheorem{example}[theorem]{Example}
\begin{document}

\title[Representations and deformations of 3-Hom-$\rho$-Lie algebras]
 {Representations and deformations of 3-Hom-$\rho$-Lie algebras}

\bibliographystyle{amsplain}

\author[E. Peyghan, Z. Bagheri, T. CAKMAK and A. Gezer]{E. Peyghan $^{1}$, Z. Bagheri  $^{2}$,  I. Gultekin  $^{3}$ and A. Gezer $^{4}$}
\address{$^{1,2}$ Department of Mathematics, Faculty of Science, Arak University,
	Arak, 38156-8-8349, Iran.}
\email{e-peyghan@araku.ac.ir  $^{1}$, z-bagheri@phd.araku.ac.ir  $^{2}$}
\address{$^{3,4}$ Ataturk University, Faculty of Science, Department of Mathematics,
	25240, Erzurum-Turkey.}
\email{igultekin@atauni.edu.tr   $^{3}$, agezer@atauni.edu.tr  $^{4}$}


\keywords{3-Hom-$\rho$-Lie algbras, Abelian extensions, Deformations, Representations.}

\subjclass[2010]{11R20, 17B10, 16W25, 17B56, 17B70, 17B75.}


\begin{abstract}
The aim of this paper is to introduce 3-Hom-$\rho$-Lie algebra structures generalizing the
algebras of 3-Hom-Lie algebra. Also, we investigate the representations and deformations theory of this type pf Hom-Lie algebras. Moreover, we introduce the definition of extensions and abelian
extensions of 3-hom-$\rho$-Lie algebras and show that associated to any abelian extension, there is a representation and a 2-cocycle.
\end{abstract}

\maketitle







\section{Introduction}
The structure of Hom-Lie algebra appeared first as a generalization of Lie algebra by Hartwig, Larsson and Silvestrov in \cite{J D S}. In 1994, the concept of  $\rho$-Lie algebra or Lie color algebra introduced by Bongaarts \cite{BP1} and then in 1998, Scheunert and Zhang introduced the cohomology theory of Lie
color algebras in \cite{MR}. Also, in 2012, Yuan \cite{LY} introduced the notion of a hom-Lie color algebra which can be viewed as an extension of Hom-Lie superalgebras to $G$-graded algebras, where $G$ is any abelian group. In 2015, Abdaoui, Ammarto and Makhlouf defined representations
and a cohomology of the Hom-Lie color algebra in \cite{AAM}. After two years, in 2017, $T^*$-extensions and abelian extensions of the Hom-Lie color algebras are studied by Bing Sun, Liangyun Chen and Yan Liu in \cite{BLY}.

Filippov, in 1985 introduced a concept that is called $n$-Lie algebra. These Lie algebras are represented with various names such as Filippov algebra, Nambu-Lie algebra, Lie $n$-algebra. The notion of $n$-Lie algebra has close relationships with many fields in mathematics
and mathematical physics, for their applications refer to \cite{TL, AG, HH, JN, JN1}. The cohomology theory and deformation theory for $n$-Lie algebras was introduced respectively by Takhtajan and Gautheron in \cite{TL1, D1, PG}. H. Ataguema, A. Makhlouf and S. Silvestrov in \cite{HAS} have introduced the notion of 3-hom-Lie
algebras and representations and module-extensions of 3-hom-Lie algebras have investigated by Y. Liu, L. Chena and Y. Ma in \cite{LCM}. The notion of 3-Lie colour algebras have introduced by T. Zhang and have studied Cohomology and deformations of 3-Lie colour algebras by him (see \cite{T}, for more details).

In this paper, we introduce the notion of 3-Hom-Lie colour algebras or 3-Hom-$\rho$-Lie algebras and study the representation and deformation theory of this kind of Hom-Lie algebras.

This paper is arranged as follows: In Section 2, we recall some necessary background knowledge including $\rho$-commutative and
Hom-$\rho$-Lie algebras. In the next, we discuss about the 3-Hom-$\rho$-Lie algebras and define representations, modules, $\phi^k$derivations of it and show that representations and modules of 3-Hom-$\rho$-Lie algebras are equivalent. This section also contains the $T^{\star}$-extension of 3-Hom-$\rho$-Lie algebras. Section 3 is contained abelian extensions of 3-Hom-$\rho$-Lie algebras and the reader will get some results in this case. In this section we show that associated to any abelian extension, there is a representation and a 2-cocycle.  Section 4 is devoted to discuss about deformations and the Hom Nijenhuis operator of 3-Hom-$\rho$-Lie algebras. Furthermore, we show that $\omega$ generates a $t$-parameter infinitesimal deformation of the 3-Hom-$\rho$-Lie algebra $A$ is equivalent to 3-Hom-$\rho$-Lie algebra which is 1-cocycle of $A$ with coefficients in the adjoint representation.


\section{3-hom-$\rho$-lie algebras}
In this section, we summarize some definitions concerning $\rho$-commutative algebras and Hom-$\rho$-Lie algebras. We also introduce the notion of 3-Hom-$\rho$-Lie algebras. Representations, modules, $\phi^k$-derivations and some results about them are studied in this section.

Let $A$ be an associative and unital algebra over a field $k$ ($k = \mathbb{R}$ or $k = \mathbb{C}$), grading by an abelian group $(G, +)$ that is  the vector space $A$ has a $G$-grading $A = \oplus_{a\in G}A_a$
such that $A_aA_b\subset A_{a+b}$. A map
$\rho:G\times G\longrightarrow k^{\star}$ is called a two-cycle if the following conditions hold
\begin{align}
&\rho(a,b) =\rho(b,a)^{-1},\quad a,b\in G,\\
&\rho(a+b,c) =\rho(a,c)\rho(b,c),\quad a,b,c\in G.
\end{align}
The above conditions say that 
$\rho(a,b)\neq 0$, $\rho(0,b)=1$ and $\rho(c,c)=\pm 1$ for all $a,b,c\in A$ , $c\neq 0$.

Let us denote by $Hg(A)$ the set of homogeneous elements in $A$. The $\rho$-commutator of two homogeneous elements $f,g$ is
\begin{equation}\label{111}
[f, g]_{\rho} = fg-\rho(|f|, |g|)gf,
\end{equation}
where $|f|$ denotes the $G$-degree of a (non-zero)  homogeneous element $f\in A$.\\
A $\rho$-commutative algebra is a $G$-graded algebra $A$ with a given two-cycle
$\rho$ such that $f g =\rho(| f |, |g|)g f $ for all homogeneous elements $ f$ and
$g$ in $A$ (i.e., $[f,g]_{\rho}=0$).
\begin{definition}
A 2-Hom-$\rho$-Lie algebra or for simply a Hom-$\rho$-Lie algebra is a $G$-graded vector space $A$ together with a bilinear map $[.,.]_{\rho}:A\times A\longrightarrow A$, a two-cycle $\rho$ and a linear map $\phi:A\longrightarrow A$ satisfying the following conditions
\begin{align*}
&\bullet |[f,g]_{\rho}|= | f|+| g|,\\
&\bullet [f, g]_{\rho} = -\rho(f,g)[g, f]_{\rho},\\
&\bullet \rho(h,f)[\phi(f), [g, h]_{\rho}]_{\rho}+\rho(g,h)[\phi(h), [f, g]_{\rho}]_{\rho}+\rho(f,g)[\phi(g), [h, f]_{\rho}]_{\rho} = 0.
\end{align*} 
The third condition is equivalent to 
$$[\phi(f), [g, h]_{\rho}]_{\rho}=[[f, g]_{\rho},\phi(h)]_{\rho}+\rho(f,g)[\phi(g), [f, h]_{\rho}]_{\rho}.$$
\end{definition}
\begin{definition}
A quadruple
$ (A, [.,.,.], \rho,\phi)$ 
consisting of a  $G$-graded vector space
 $A =\bigoplus_{a\in G} A_a$,
 a trilinear map
 $[.,.,.]: A \times A\times A \longrightarrow A$,
a two-cycle
 $\rho : G\times G\longrightarrow k^{\ast}$
 and an even linear map
$\phi : A\longrightarrow A$ is called a 3-Hom-$\rho$-Lie algebra if the following condition are satisfied
\begin{align*} 
&(1)~~|[f_1,f_2,f_3]|=|f_1|+ |f_2| +|f_3|,\\
&(2)~~[\phi(f_1),\phi(f_2),[g_1,g_2,g_3]]=[[f_1,f_2,g_1],\phi(g_2),\phi(g_3)]-\rho(f_1+f_2,g_1)[\phi(g_1),[f_1,f_2,g_2],\phi(g_3)]\\
&\ \ \ \ \ \ \ \ \ \ \ \ \ \ \ \ \ \ \ \ \ \ \ \ \ \ \ \ \ \ \ \ \ \ \ \ +\rho(f_1+f_2, g_1+g_2)[\phi(g_1),\phi(g_2),[f_1,f_2,g_3]].
\end{align*}
The second property is called $\rho$-fundamental identity.
\end{definition}
Note that, the bracket introduced in the above definition has the $\rho$-skew symmetry property with respect to the displacement of every two elements of itself.
\begin{definition}
A 3-Hom-$\rho$-Lie algebra  $(A, [.,.,.],\rho,\phi)$ is said to be multiplicative if $\phi$ is a Lie algebra morphism, i.e. for $f,g,h\in A$, $\phi[f,g,h]=[\phi(f),\phi(g),\phi(h)]$, regular if $\phi$ is an automorphism for $[.,.,.]$, and involutive if $\phi^2 = Id_A$.
\end{definition}
\begin{definition}
Let $(A,[.,.,.]_A,\phi)$ and $(B,[.,.,.]_B,\psi)$ be two 3-Hom-$\rho$-Lie algebra. A linear map $\alpha:A\longrightarrow B$ is said to be a morphism of 3-Hom-$\rho$-Lie algebras if
$$\alpha[f,g,h]_A=[\alpha(f),\alpha(g),\alpha(h)]_B,$$
for all $f,g,h\in A$ and 
$$\alpha\circ\phi=\psi\circ\alpha.$$
\end{definition}
Let us denote by $\vartheta=:\{(f,\alpha(f))| f\in A\}\subseteq A\oplus B$ the group of linear maps $\alpha:A\longrightarrow B$.

Let $(A,[.,.,.],\rho,\phi)$ be a multiplicative 3-Hom-$\rho$-Lie algebra. We define the following operation on the fundamental set $\mathcal{L}=\wedge^2A$ by
\begin{equation}\label{123}
[(f_1,f_2), (g_1,g_2)]_{\mathcal{L}}=([f_1, f_2, g_1],\phi(g_2))+\rho(f_1+f_2,g_1)(\phi(g_1),[f_1,f_2,g_2]).
\end{equation}
If we define the even linear map $\phi_1:\mathcal{L}\longrightarrow \mathcal{L}$ by $\phi_1(f_1,f_2)=(\phi(f_1),\phi(f_2))$, then we have the multiplicative Hom-$\rho$-Lie algebra 
$(\mathcal{L},[.,.]_{\mathcal{L}},\rho,\phi_1)$.

Let $A$ be a 3-Hom-$\rho$-Lie algebra and $V$ be a $G$-graded vector space. Recall that ${\rm End_G(V)} = {\rm Hom_G(V, V )}$ and ${\rm Hom_G(A, V)}$ are $G$-graded vector spaces.
\begin{definition}
Let $A$ be a 3-Hom-$\rho$-Lie algebra, $V$ be a $G$-graded vector space and $\mu$ be a linear map from 
$\mathcal{L}=\wedge^2A$ to ${\rm End_G(V)}$. Then $(V,\mu)$ is called a representation of $A$ with respect to $\beta\in {\rm End_G(V)}$ if the following conditions are satisfied	
\begin{align}
\mu[(f_1,f_2),(g_1,g_2)]_{_\mathcal{L}}\circ \beta&=\mu(\phi_1(f_1,f_2))\mu(g_1,g_2)-\rho(f_1+f_2,g_1+g_2)\mu(\phi_1(g_1,g_2))\mu(f_1,f_2),\label{1}\\
\mu([g_1,g_2,g_3],\phi(f))\circ \beta&=\mu(\phi_1(g_1,g_2))\mu(g_3,f)+\rho(g_1,g_2+g_3)\mu(\phi_1(g_2,g_3))\mu(g_1,f)\label{2}\\
&\ \ \  +\rho(g_1+g_2,g_3)\mu(\phi_1(g_3,g_1))\mu(g_2,f),\nonumber\\
\mu(\phi(g),[f_1,f_2,f_3])\circ \beta&=\rho(g,f_1+f_2)\mu(\phi_1(f_1,f_2))\mu(g,f_3)+\rho(g,f_2+f_3)\rho(f_1,f_2+f_3)\mu(\phi_1(f_2,f_3))\mu(g,f_1)\\
&\ \ \  +\rho(g,f_1+f_3)\rho(f_1+f_2,f_3)\mu(\phi_1(f_3,f_1))\mu(g,f_2),\nonumber\\
\mu(\phi_1(f_1,f_2))\mu(g_1,g_2)&=\rho(f_1+f_2,g_1+g_2)\mu(\phi_1(g_1,g_2))\mu(f_1,f_2)\label{7.1}\\
&\ \ \ +\rho(f_1+f_2,g_1)\mu(\phi(g_1),[f_1,f_2,g_2])\circ\beta +\mu([f_1,f_2,g_1],\phi(g_2))\circ\beta.\nonumber
\end{align}
\end{definition}
\begin{example}
Let $A$ be a 3-Hom-$\rho$-Lie algebra, $V=A$ and $\phi=\beta\in {\rm End_G(A)}$. Then the linear map $ad:A\times A\longrightarrow {\rm End_G(A)}$ defined by $ad(f_1,f_2)(f_3)=[f_1,f_2,f_3]$
is a representation of $A$ with respect to $\beta=\phi$.\\
It is enough to check the conditions \eqref{1} and \eqref{2}. For the condition \eqref{1}, by \eqref{123} and the 
$\rho$-fundamental identity of 3-Hom-$\rho$-Lie algebra $A$, we have 
\begin{align*}
ad[(f_1,f_2),(g_1,g_2)]_{_\mathcal{L}}\phi(g_3) &=ad([f_1,f_2,g_1],\phi(g_2))\phi(g_3)+\rho(f_1+f_2,g_1) ad(\phi(g_1),[f_1,f_2,g_2])\phi(g_3)\\
&=[[f_1,f_2,g_1],\phi(g_2),\phi(g_3)]+\rho(f_1+f_2,g_1)[\phi(g_1),[f_1,f_2,g_2],\phi(g_3)]\\
&=[\phi(f_1),\phi(f_2),[g_1,g_2,g_3]]-\rho(f_1+f_2,g_1+g_2)[\phi(g_1),\phi(g_2),[f_1,f_2,g_3]]\\
&=ad(\phi(f_1),\phi(f_2))ad(g_1,g_2)g_3-\rho(f_1+f_2,g_1+g_2)ad(\phi(g_1),\phi(g_2))ad(f_1,f_2)g_3.
\end{align*}
The other conditions prove similarly.
\end{example}
\begin{definition}
Let $(A,[.,.,.],\rho,\phi)$ be a 3-Hom-$\rho$-Lie algebra. Consider the triple $(V,\beta,\cdot)$ consisting of a $G$-graded vector space $V$, an even homomorphism $\beta$ of vector spaces and a linear operation $\cdot:\mathcal{L}\times V\longrightarrow V$ such that $\mathcal{L}_{g^{\prime}}\cdot V_h\subseteq V_{g^{\prime}+h}$ for all $g^{\prime},h\in G$. Then $(V,\beta,\cdot)$ is called  an $A$-module if
\begin{align}
[(f_1,f_2),(g_1,g_2)]_{_\mathcal{L}}\beta(m)&=\phi_1(f_1,f_2)\cdot((g_1,g_2)\cdot m) -\rho(f_1+f_2,g_1+g_2)\phi_1(g_1,g_2)\cdot((f_1,f_2)\cdot m),\label{3}\\
([f_1,f_2,f_3],\phi(g))\cdot \beta(m)&=\phi_1(f_1,f_2)\cdot ((f_3,g)\cdot m)+\rho(f_1,f_2+f_3)\phi_1(f_2,f_3)\cdot ((f_1,g)\cdot m)\label{4}\\
&\ \ \ +\rho(f_1+f_2,f_3)\phi_1(f_3,f_1)\cdot((f_1,g)\cdot m),\nonumber\\
(\phi(f),[g_1,g_2,g_3])\cdot \beta(m)&=\rho(f,g_1+g_2)\phi_1(g_1,g_2)\cdot ((f_2,g_3)\cdot m)\label{44}\\
&\ \ \ +\rho(f,g_2+g_3)\rho(g_1,g_2+g_3)\phi_1(g_2,g_3)\cdot ((f,g_1)\cdot m)\nonumber\\
&\ \ \  +\rho(f,g_1+g_3)\rho(g_1+g_2,g_3)\phi_1(g_3,g_1)\cdot((f,g_2)\cdot m),\nonumber\\
\phi_1(f_1,f_2)((g_1,g_2)\cdot m)&=\rho(f_1+f_2,g_1)(\phi(g_1),[f_1,f_2,g_1])\cdot\beta(m)+([f_1,f_2,g_1],\phi(g_2))\cdot\beta(m)\\
&\ \ \ +\rho(f_1+f_2,g_1+g_2)\phi_1(g_1,g_2)\cdot((f_1,f_2)\cdot m).\nonumber
\end{align}
\end{definition} 
\begin{lemma}
Let $(A,[.,.,.],\rho,\phi)$ be a 3-Hom-$\rho$-Lie algebra. The modules and the representations of $A$ are equivalent.
\end{lemma}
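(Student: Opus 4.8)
The plan is to set up an explicit bijection between representations and modules sharing the same underlying data $(V,\beta)$, and then to show that under this bijection the four representation axioms \eqref{1}, \eqref{2}, the third displayed identity, and \eqref{7.1} correspond term-by-term to the four module axioms \eqref{3}, \eqref{4}, \eqref{44}, and the final displayed identity. The correspondence is the usual adjunction between linear maps $\mathcal{L}\longrightarrow \mathrm{End}_G(V)$ and bilinear operations $\cdot\colon\mathcal{L}\times V\longrightarrow V$: starting from a representation $\mu$, I would define the action by $(f_1,f_2)\cdot m:=\mu(f_1,f_2)(m)$, and starting from an action $\cdot$, I would define $\mu(f_1,f_2)\in\mathrm{End}_G(V)$ by $\mu(f_1,f_2)(m):=(f_1,f_2)\cdot m$. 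These two assignments are clearly mutually inverse.

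First I would verify that the two ambient (non-axiomatic) requirements agree. The demand that $\mu$ take values in $\mathrm{End}_G(V)$ while respecting the $G$-grading of $\mathcal{L}$ is precisely the condition $\mathcal{L}_{g'}\cdot V_h\subseteq V_{g'+h}$ imposed on a module; linearity of $\mu$ in the variable from $\mathcal{L}$ matches bilinearity of $\cdot$; and the even map $\beta$ is common to both structures. Hence the underlying data are already in bijection before any of the cocycle-type identities are imposed.

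Next I would translate each representation axiom into its module counterpart by evaluating both sides at an arbitrary element $m\in V$. Two elements of $\mathrm{End}_G(V)$ coincide if and only if they agree on every $m$, so applying \eqref{1} to $m$ and rewriting $\mu(a)(m)=a\cdot m$ produces \eqref{3} verbatim: each composite $\mu(\phi_1(f_1,f_2))\mu(g_1,g_2)$ becomes the nested action $\phi_1(f_1,f_2)\cdot\big((g_1,g_2)\cdot m\big)$, while the scalar factors $\rho(\cdots)$ are carried along unchanged. The same evaluation carries \eqref{2} to \eqref{4}, the third representation identity to \eqref{44}, and \eqref{7.1} to the last module identity. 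Since each evaluation step is reversible—an equality holding in $V$ for all $m$ reads back as an equality in $\mathrm{End}_G(V)$—every implication runs in both directions, giving the claimed equivalence.

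I do not expect any genuine obstacle, as the argument is purely formal. The one point requiring care is bookkeeping: I must check that the $\rho$-coefficients, the positions of $\phi_1$ versus $\phi$, and the order of the entries inside each bracket are preserved under evaluation, so that \eqref{2} aligns with \eqref{4} (and not with a permuted variant) and likewise the third representation identity with \eqref{44}. Once all four pairs are confirmed to match exactly, the lemma follows.
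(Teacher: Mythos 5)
Your proposal is correct and follows essentially the same route as the paper: both define the mutually inverse assignments $(f_1,f_2)\cdot m=\mu(f_1,f_2)(m)$ and verify the axioms by evaluating each representation identity at an arbitrary $m\in V$, with the converse being the same translation read backwards. If anything, your write-up is slightly more complete, since the paper explicitly checks only conditions \eqref{3} and \eqref{4} and dismisses the remaining identities and the converse as similar or easy.
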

\begin{proof}
Let $(V,\mu,\beta)$ be a representation of $A$. We define the linear operation $.:\mathcal{L}\times V\longrightarrow V$ by $(f,g,m)\mapsto(f,g)\cdot m=\mu(f,g)(m)$ and show that $(V,\beta,\cdot)$ is an $A$-module. For this purpose, it is sufficient to check two relations \eqref{3} and \eqref{4}. Then we have
\begin{align*}
[(f_1,f_2),(g_1,g_2)]_{\mathcal{L}}\cdot \beta(m)&=\mu([(f_1,f_2),(g_1,g_2)]_{\mathcal{L}})\beta(m)=\mu(\phi_1(f_1,f_2))\mu(g_1,g_2)m\\
&\ \ \ -\rho(f_1+f_2,g_1+g_2)\mu(\phi_1(g_1,g_2))\mu(f_1,f_2)m\\
&=\phi_1(f_1,f_2)\cdot((g_1,g_2)\cdot m) -\rho(f_1+f_2,g_1+g_2)\phi_1(g_1,g_2)\cdot((f_1,f_2)\cdot m).
\end{align*} 
For the next condition, we have
\begin{align*}
([f_1,f_2,f_3],\phi(g))\cdot \beta(m)&=\mu([f_1,f_2,f_3],\phi(g)) \beta(m)=\mu(\phi_1(f_1,f_2))\mu(f_3,g)m\\
&\ \ \ +\rho(f_1,f_2+f_3)\mu(\phi_1(f_2,f_3))\mu(f_1,g)m+\rho(f_1+f_2,f_3)\mu(\phi_1(f_3,f_1))\mu(f_2,g)m\\
&=\phi_1(f_1,f_2)\cdot ((f_3,g)\cdot m)+\rho(f_1,f_2+f_3)\phi_1(f_2,f_3)\cdot ((f_1,g)\cdot m)\\
&\ \ \ +\rho(f_1+f_2,f_3)\phi_1(f_3,f_1)\cdot((f_1,g)\cdot m).
\end{align*}
In the same method, we can check the last two conditions.\\
For the converse, consider the linear map $\mu:A\times A\longrightarrow {\rm End_G(V)}$ by $\mu(f,g)m=(f,g)\cdot m$. It is easy to see that for an even homomorphism $\beta:V\longrightarrow V$, the triple $(V,\beta,\cdot)$ is an $A$-module.
\end{proof}
\begin{definition}
Let $A$ be a 3-Hom-$\rho$-Lie algebra and $(V,\mu)$ be an $A$-module. An $n$-cochain on $A$ is a $\rho$-skew symmetric morphism $\omega$ from $\wedge^{2n+1}(A)$ into $V$ of degree $|\omega|$. Let us denote by $C^n(A,V)$ the set of all $n$-cochains on $A$, in the sense of 
$$C^n(A,V)=Hom(\wedge^{2n+1}(A), V), \quad \omega(f_1,\cdots, f_{2n+1})\in V_{|f_1|+\cdots |f_{2n+1}|+|\omega|},$$
where $f_1,\cdots,f_{2n+1} \in Hg(A)$.
$\omega\in C^n(A,V)$ is called an $n$-Hom-cochain on $A$ if for $f_1,\cdots,f_{2n+1} \in Hg(A)$ and $\beta\in {\rm End_G(V)}$, the following relation holds
$$\beta(\omega(f_1,\cdots,f_{2n+1})) = \omega(\phi(f_1),\cdots, \phi(f_{2n+1})).$$
We denote by $C^n_{\phi}(A,V)$ the set of all $n$-Hom-cochains on $A$.\\
Let $A$ be a multiplication 3-Hom-$\rho$-Lie algebra and $\beta=Id_V$. Define the coboundary operator $d_{n-1}:C^{n-1}_{\phi}(A,V)\longrightarrow C^n_{\phi}(A,V)$ by
\begin{align*}
d_{n-1}\omega(f_1,\cdots,f_{2n+1})&=(-1)^{n+1}\rho(f_1+\cdots+f_{2n-2}, f_{2n-1}+f_{2n+1})\rho(f_{2n-1}+f_{2n},f_{2n+1})\\
&\ \ \ \times\mu(\phi^{n-1}(f_{2n+1}), \phi^{n-1}(f_{2n-1}))\omega(f_1,\cdots,f_{2n-2},f_{2n})\\
&\ \ \ +(-1)^{n+1}\rho(f_1+\cdots+f_{2n-2}, f_{2n}+f_{2n+1})\rho(f_{2n-1},f_{2n}+f_{2n+1})\\
&\ \ \ \times\mu(\phi^{n-1}(f_{2n}), \phi^{n-1}(f_{2n+1}))\omega(f_1,\cdots,f_{2n-1})\\
&\ \ \ +\sum_{k=1}^n (-1)^{k+1}\rho(f_1+\cdots+f_{2k-2}, f_{2k-1}+f_{2k})\\
&\ \ \ \times\mu(\phi^{n-1}(f_{2k-1}), \phi^{n-1}(f_{2k}))\omega(f_1,\cdots,\widehat{f_{2k-1}},\widehat{f_{2k}},\cdots,f_{2n+1})\\
&\ \ \ +\sum_{k=1}^n\sum_{j=2k+1}^{2n+1}(-1)^k\rho(f_1+\cdots+f_{2k}, f_{2k+1}+\cdots+f_{j-1})\\
&\ \ \ \times\omega(\phi(f_1),\cdots,\widehat{\phi(f_{2k-1})},\widehat{\phi(f_{2k})},\cdots,[f_{2k-1},f_{2k},f_j],\cdots, \phi(f_{2n+1})),
\end{align*}
for $n\geq 1$, and $\omega\in C^n_{\phi}(A,V)$, where $\widehat{\phi(f_{2k-1})}$ means that $\phi(f_{2k-1})$ is omitted. Note that $| d\omega|=|\omega|$ and $d_{n}\circ d_{n-1}=0$ (the condition $d_{n}\circ d_{n-1}=0$ does note follow if the condition $\omega\circ\phi=\omega$ is omitted, so it is necessary to define the differential operator on $n$-Hom-cochains).\\
For $n=1$:
\begin{align*}
d_0\omega(f_1,f_2,f_3)&=\mu(f_1,f_2)\omega(f_3)+\rho(f_1+f_2,f_3)\mu(f_3,f_1)\omega(f_2)\\
&\ \ \ +\rho(f_1,f_2+f_3)\mu(f_2,f_3)\omega(f_1)-\omega([f_1,f_2,f_3]).
\end{align*}
For $n=2$:
\begin{align*}
d_1\omega(f_1,f_2,g_1,g_2,g_3)&=\omega(\phi(f_1),\phi(f_2), [g_1,g_2,g_3])+\mu(\phi(f_1),\phi(f_2))\omega(g_1,g_2,g_3)\\
&\ \ \ -\omega([f_1,f_2,g_1],\phi(g_2),\phi(g_3))+\rho(f_1+f_2,g_1)\omega(\phi(g_1),[f_1,f_2,g_2],\phi(g_3))\\
&\ \ \ -\rho(f_1+f_2,g_1+g_2)\omega(\phi(g_1),\phi(g_2),[f_1,f_2,g_3])\\
&\ \ \ -\rho(f_1+f_2,g_2+g_3)\rho(g_1,g_2+g_3)\mu(\phi(g_2),\phi(g_3))\omega(f_1,f_2,g_1)\\
&\ \ \ -\rho(f_1+f_2,g_1+g_3)\rho(g_1+g_2,g_3)\mu(\phi(g_3),\phi(g_1))\omega(f_1,f_2,g_2)\\
&\ \ \ -\rho(f_1+f_2,g_1+g_2)\mu(\phi(g_1),\phi(g_2))\omega(f_1,f_2,g_3).
\end{align*}
\end{definition}
\begin{definition}\label{1234}
Let $A$ be a 3-Hom-$\rho$-Lie algebra and $(V,\mu)$ be an $A$-module. Then a morphism $\nu\in {\rm Hom_G(A,V)}$ is called 0-Hom-cocycle if and only if $d_0\nu=0$, in the other word
\begin{align*}
\mu(f_1,f_2)\nu(f_3)&+\rho(f_1+f_2,f_3)\mu(f_3,f_1)\nu(f_2)\\
&+\rho(f_1,f_2+f_3)\mu(f_2,f_3)\nu(f_1)=\nu([f_1,f_2,f_3]).
\end{align*}
Also, $\omega\in {\rm Hom(\wedge^3A,V)}$ is called 1-Hom-cocycle with respect to $\mu$ if and only if $d_1\omega=0$.
\end{definition}
\begin{definition}
Let $(A,[.,.,.],\rho,\phi)$ be a multiplicative 3-Hom-$\rho$-Lie algebra. For any non-negative integer $k$, denote the $k$ times composition of $\phi$ by $\phi^k$ ($\phi^k=\phi\circ\cdots\circ\phi$ ($k$ times)) such that $\phi^0=id$ and $\phi^1=\phi$. A $\phi^k$-$\rho$-derivation of degree $|X|$ on $A$ is a linear map $X : A \longrightarrow A$ 
such that\\
$$X\circ\phi=\phi\circ X\quad i.e.,\quad [X,\phi]_{\rho}=0,$$
and
\begin{equation}\label{5}
X[f,g,h]= [X(f),\phi^k(g),\phi^k(h)]+ \rho(X,f)[\phi^k(f),X(g),\phi^k(h)]+\rho(X,f+g)[\phi^k(f),\phi^k(g),X(h)],
\end{equation}
for all $f,g,h\in A$. Let us denote by $\rho\text{-}Der_{\phi^k} A$ the set of all $\phi^k$-$\rho$-derivations of $A$. 
\end{definition}
\begin{example}
We define the even linear map $ad_k(f_1,f_2):A\longrightarrow A$ by $ad_k(f_1,f_2)(g)=[f_1,f_2,\phi^k(g)]$ for $g\in A$. If we assume that for any $f_1,f_2\in A$,
$\phi(f_1)=f_1$ and $\phi(f_2)=f_2$, then $ad_k(f_1,f_2)$ is a $\phi^{k+1}$-derivation.\\
If we check the accuracy of the equality \eqref{5}, the assertion follows. Thus, we have
\begin{align*}
ad_k(f_1,f_2)[f,g,h]&=[f_1,f_2,\phi^k[f,g,h]]=[\phi(f_1),\phi(f_2),\phi^k[f,g,h]]\\
&=[\phi(f_1),\phi(f_2),[\phi^k(f),\phi^k(g),\phi^k(h)]]\\
&=[[f_1,f_2,\phi^k(f)],\phi^{k+1}(g),\phi^{k+1}(h)]\\
&\ \ \ +\rho(f_1+f_2,f)[\phi^{k+1}(f),[f_1,f_2,\phi^k(g)],\phi^{k+1}(h)]\\
&\ \ \ +\rho(f_1+f_2,f+g)[\phi^{k+1}(f),\phi^{k+1}(g),[f_1,f_2,\phi^k(h)]]\\
&=[ad_k(f_1,f_2)(f),\phi^{k+1}(g),\phi^{k+1}(h)]\\
&\ \ \ +\rho(f_1+f_2,f)[\phi^{k+1}(f),ad_k(f_1,f_2(g)),\phi^{k+1}(h)]\\
&\ \ \ +\rho(f_1+f_2,f+g)[\phi^{k+1}(f),\phi^{k+1}(g),ad_k(f_1,f_2)(h)].
\end{align*}
$ad_k(f_1,f_2)$  is called an inner $\phi^{k+1}$-derivation. Denote by ${\rm Inn_{\phi^k}(A)}$ the set of inner $\phi^{k}$-derivation, i.e.,
$${\rm Inn_{\phi^k}(A)}=\{[f_1,f_2,\phi^{k-1}(\cdot)]|~~ f_1,f_2\in A,~~ \phi(f_i)=f_i~~ i=1,2\}.$$
\end{example}
\begin{definition}
Let $A$ be a 3-Hom-$\rho$-Lie algebra.\\
1: A linear map $X:A\longrightarrow A$ is said to be a homogeneous generalized $\phi^k$-derivation of degree $|X|$ of $A$, if there exist three linear maps $Y,Z,W:A\longrightarrow A$ such that 
$$[X,\phi]=0,~~[Y,\phi]=0,~~[Z,\phi]=0,~~[W,\phi]=0,$$
and
\begin{align*}
W[f,g,h]&=[X(f),\phi^k(g),\phi^k(h)]+\rho(X,f)[\phi^k(f), Y(g),\phi^k(h)]\\
&\ \ \ +\rho(X,f+g)[\phi^k(f),\phi^k(g),Z(h)],
\end{align*}
for all $f,g,h\in A$. We denote the set of all homogeneous generalized $\phi^k$-derivation of degree 
$| X|$ of $A$ by $GDer_{\phi^k}(A)$.\\
2: We call $X:A\longrightarrow A$ a homogeneous $\phi^k$-quasi derivation of degree $|X|$ of $A$, if there exists a linear map $Y:A\longrightarrow A$ such that
$$[X,\phi]=0,~~[Y,\phi]=0,$$
and
\begin{align*}
Y[f,g,h]&=[X(f),\phi^k(g),\phi^k(h)]+\rho(X,f)[\phi^k(f), X(g),\phi^k(h)]\\
&\ \ \ +\rho(X,f+g)[\phi^k(f),\phi^k(g),X(h)],
\end{align*}
for all $f,g,h\in A$. We denote the set of all homogeneous $\phi^k$-quasi derivation of degree 
$| X|$ of $A$ by $QDer_{\phi^k}(A)$.\\
3: We call $X:A\longrightarrow A$ a homogeneous $\phi^k$-centroid element of degree $|X|$ of $A$, if it satisfies for all $f,g,h\in Hg(A)$
\begin{align*}
X[f,g,h]&=[X(f),\phi^k(g),\phi^k(h)]=\rho(X,f)[\phi^k(f), X(g),\phi^k(h)]\\
&=\rho(X,f+g)[\phi^k(f),\phi^k(g),X(h)].
\end{align*}
We denote the set of all homogeneous $\phi^k$-centroid elements of degree 
$| X|$ of $A$ by $C_{\phi^k}(A)$.\\
4: $X:A\longrightarrow A$ is said to be a homogeneous $\phi^k$-quasi centroid element of degree $|X|$ of $A$, if it satisfies for all $f,g,h\in Hg(A)$
\begin{align*}
X[f,g,h]&=[X(f),\phi^k(g),\phi^k(h)]=\rho(X,f)[\phi^k(f), X(g),\phi^k(h)]\\
&=\rho(X,f+g)[\phi^k(f),\phi^k(g),X(h)].
\end{align*}
We denote the set of all homogeneous $\phi^k$-quasi centroid elements of degree 
$| X|$ of $A$ by $QC_{\phi^k}(A)$.
\end{definition}
\begin{proposition}
Let $(A,[.,.,.],\rho,\phi)$ be a multiplication 3-Hom-$\rho$-Lie algebra. If $X\in GDer_{\phi^k}(A)$ and $X^{\prime}\in C_{\phi^s}(A)$, then $X^{\prime}X\in GDer_{\phi^{k+s}}(A)$.
\end{proposition}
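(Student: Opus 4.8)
The plan is to exhibit, for the composite $X'X$, an explicit triple of auxiliary maps witnessing that $X'X$ is a generalized $\phi^{k+s}$-derivation. Since $X\in GDer_{\phi^k}(A)$, fix linear maps $Y,Z,W$ with $[X,\phi]=[Y,\phi]=[Z,\phi]=[W,\phi]=0$ and
\begin{equation*}
W[f,g,h]=[X(f),\phi^k(g),\phi^k(h)]+\rho(X,f)[\phi^k(f),Y(g),\phi^k(h)]+\rho(X,f+g)[\phi^k(f),\phi^k(g),Z(h)].
\end{equation*}
My candidate witnesses for $X'X$ are $W''=X'W$, $Y''=X'Y$ and $Z''=X'Z$; the whole proof amounts to checking that these satisfy the defining identity of $GDer_{\phi^{k+s}}(A)$.

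First I would record the elementary facts used throughout: $\phi$ is even, so $|\phi^k(f)|=|f|$ for every homogeneous $f$, and by composition $\phi^s\circ\phi^k=\phi^{k+s}$. Because $X'$ commutes with $\phi$ (for a centroid element the compatibility $[X',\phi]=0$ is the standard convention, and it is the one ingredient that must be invoked beyond the three displayed equalities), and each of $X,Y,Z,W$ commutes with $\phi$, every composite again commutes with $\phi$: for instance $X'X\phi=X'\phi X=\phi X'X$, so $[X'X,\phi]=0$, and likewise $[X'Y,\phi]=[X'Z,\phi]=[X'W,\phi]=0$. This disposes of the four bracket-with-$\phi$ conditions.

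Next I would apply $X'$ to the displayed identity for $W$. By linearity this splits into three terms, to each of which I apply the matching one of the three defining equalities of $X'\in C_{\phi^s}(A)$: the first equality $X'[a,b,c]=[X'(a),\phi^s(b),\phi^s(c)]$ to the first term, the second equality $X'[a,b,c]=\rho(X',a)[\phi^s(a),X'(b),\phi^s(c)]$ to the second, and the third $X'[a,b,c]=\rho(X',a+b)[\phi^s(a),\phi^s(b),X'(c)]$ to the third. Using $|\phi^k(f)|=|f|$ to rewrite $\rho(X',\phi^k(f))=\rho(X',f)$ and $\rho(X',\phi^k(f)+\phi^k(g))=\rho(X',f+g)$, together with $\phi^s\phi^k=\phi^{k+s}$, this yields
\begin{align*}
X'W[f,g,h]&=[X'X(f),\phi^{k+s}(g),\phi^{k+s}(h)]+\rho(X,f)\rho(X',f)[\phi^{k+s}(f),X'Y(g),\phi^{k+s}(h)]\\
&\quad+\rho(X,f+g)\rho(X',f+g)[\phi^{k+s}(f),\phi^{k+s}(g),X'Z(h)].
\end{align*}

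Finally I would collapse the products of $\rho$-factors using the multiplicativity axiom of the two-cycle, $\rho(a+b,c)=\rho(a,c)\rho(b,c)$. Since $|X'X|=|X'|+|X|$, this gives $\rho(X,f)\rho(X',f)=\rho(X'X,f)$ and $\rho(X,f+g)\rho(X',f+g)=\rho(X'X,f+g)$, so the right-hand side becomes exactly
\begin{equation*}
[(X'X)(f),\phi^{k+s}(g),\phi^{k+s}(h)]+\rho(X'X,f)[\phi^{k+s}(f),Y''(g),\phi^{k+s}(h)]+\rho(X'X,f+g)[\phi^{k+s}(f),\phi^{k+s}(g),Z''(h)],
\end{equation*}
which is precisely the generalized $\phi^{k+s}$-derivation identity for $X'X$ with witnesses $W''=X'W$, $Y''=X'Y$, $Z''=X'Z$. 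Hence $X'X\in GDer_{\phi^{k+s}}(A)$. I expect no conceptual difficulty here; the only genuinely delicate point is the bookkeeping — matching each of the three terms with the correct centroid equality, and tracking that $\phi$-evenness lets the degree arguments of $\rho$ pass through $\phi^k$ unchanged — after which the merging of the $\rho$-factors is immediate from the two-cycle axiom.
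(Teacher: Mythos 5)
Your proof is correct and follows the same strategy as the paper's: apply $X'$ to the witness identity for $W$, push $X'$ into each bracket via the matching centroid equality, and take $X'W$, $X'Y$, $X'Z$ as the new witnesses. In fact your write-up is more careful than the paper's at exactly the two points you flagged. The paper's computation silently drops the factors $\rho(X',f)$ and $\rho(X',f+g)$ produced by the second and third centroid equalities, so its final display carries the coefficients $\rho(X,f)$ and $\rho(X,f+g)$; these match the defining identity of $GDer_{\phi^{k+s}}(A)$ for an element of degree $|X'X|=|X'|+|X|$ only when $|X'|=0$, whereas your merging $\rho(X,f)\rho(X',f)=\rho(X'X,f)$ via the two-cycle axiom is what actually makes the coefficients come out right in general. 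Likewise, the paper never verifies the conditions $[X'X,\phi]=[X'Y,\phi]=[X'Z,\phi]=[X'W,\phi]=0$ required by the definition of a generalized derivation; as you note, these need $[X',\phi]=0$, which is not part of the paper's stated definition of $C_{\phi^s}(A)$ and must be imposed as an extra (standard) hypothesis --- a gap in the paper's own argument that your proof makes explicit rather than inherits.
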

\begin{proof}
Since $X\in GDer_{\phi^k}(A)$, then there exist $Y,Z,W:A\longrightarrow A$ such that 
\begin{align*}
W[f,g,h]&=[X(f),\phi^k(g),\phi^k(h)]+\rho(X,f)[\phi^k(f), Y(g),\phi^k(h)]\\
&\ \ \ +\rho(X,f+g)[\phi^k(f),\phi^k(g),Z(h)].
\end{align*}
On the other hand, since $X^{\prime}\in C_{\phi^s}(A)$ we have
\begin{align*}
X^{\prime}W[f,g,h]&=X^{\prime}[X(f),\phi^k(g),\phi^k(h)]+\rho(X,f)X^{\prime}[\phi^k(f), Y(g),\phi^k(h)]\\
&\ \ \ +\rho(X,f+g)X^{\prime}[\phi^{k+s}(f),\phi^{k+s}(g),Z(h)]\\
&=[X^{\prime}X(f),\phi^{k+s}(g),\phi^{k+s}(h)]+\rho(X,f)[\phi^{k+s}(f), X^{\prime}Y(g),\phi^{k+s}(h)]\\
&\ \ \ +\rho(X,f+g)[\phi^{k+s}(f),\phi^{k+s}(g),X^{\prime}Z(h)].
\end{align*}
Therefore, $X^{\prime}X\in GDer_{\phi^{k+s}}(A)$.
\end{proof}
\begin{proposition}
Let $(A,[.,.,.],\rho,\phi)$ be a multiplication 3-Hom-$\rho$-Lie algebra and $X\in C_{\phi^{k}}(A)$. Then $X$ is a $\phi^k$-quasi derivation of $A$. 
\end{proposition}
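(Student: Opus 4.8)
The plan is to produce explicitly the auxiliary linear map demanded by the definition of a $\phi^k$-quasi derivation, and the natural candidate is simply $Y:=3X$.

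First I would unwind what must be checked. To say that $X\in QDer_{\phi^k}(A)$ means there is a linear map $Y:A\longrightarrow A$ with $[X,\phi]=0$ and $[Y,\phi]=0$ such that
$$Y[f,g,h]=[X(f),\phi^k(g),\phi^k(h)]+\rho(X,f)[\phi^k(f), X(g),\phi^k(h)]+\rho(X,f+g)[\phi^k(f),\phi^k(g),X(h)],$$
for all $f,g,h\in A$. So the whole task reduces to identifying the correct $Y$ and then verifying both this defining identity and the compatibility with $\phi$.

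Next I would invoke the centroid hypothesis. Since $X\in C_{\phi^k}(A)$, the three equalities in its definition say precisely that each of the three summands on the right-hand side above individually equals $X[f,g,h]$; that is,
$$[X(f),\phi^k(g),\phi^k(h)]=\rho(X,f)[\phi^k(f), X(g),\phi^k(h)]=\rho(X,f+g)[\phi^k(f),\phi^k(g),X(h)]=X[f,g,h].$$
Substituting these into the quasi-derivation identity collapses its right-hand side to $3X[f,g,h]$. Hence choosing $Y:=3X$ reduces the identity to the tautology $3X[f,g,h]=3X[f,g,h]$, valid for all homogeneous $f,g,h$ and then extended linearly. The compatibility $[Y,\phi]=0$ is immediate, since $Y$ is a scalar multiple of $X$ and commutation with $\phi$ is preserved under scaling.

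I expect no genuine obstacle: the only point worth flagging is that the $\rho$-coefficients attached to the three terms of the quasi-derivation identity are exactly those built into the defining equalities of the centroid, so the matching is term-by-term with no regrouping of degrees needed. The single hypothesis I would make explicit is that a centroid element commutes with $\phi$, i.e. $[X,\phi]=0$, as is required of all the derivation-type operators in this section; this supplies the remaining condition demanded of a quasi derivation. (Since the ground field is $\mathbb{R}$ or $\mathbb{C}$, forming $3X$ causes no difficulty.)
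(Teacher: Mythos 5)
Your proof is correct and takes essentially the same route as the paper: both use the centroid equalities to collapse the three summands of the quasi-derivation identity to $3X[f,g,h]$ and take $Y=3X$ (the paper's $X'$) as the auxiliary map. Your explicit check that $[Y,\phi]=0$ and your flagging that the centroid hypothesis must supply $[X,\phi]=0$ are points the paper's proof leaves entirely implicit, so your write-up is in fact the more careful of the two.
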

\begin{proof}
Assuming that $f,g,h \in Hg(A)$. So, we have
\begin{align*}
[X(f),\phi^k(g),\phi^k(&h)]+\rho(X,f)[\phi^k(f), X(g),\phi^k(h)]\\
&\ \ \ +\rho(X,f+g)[\phi^k(f),\phi^k(g),X(h)]\\
&=3[X(f),\phi^k(g),\phi^k(h)]=3X[f,g,h]\\
&=X^{\prime}[f,g,h].
\end{align*}
\end{proof}
\subsection{The Coadjoint Representation}
We consider $A^{\star}$ as a dual space of $A$, $A^{\star}$ is a $G$-graded space, where $A^{\star}_a=\{\alpha\in A^{\star}| \alpha(f)=0,~~\forall~~f:~~|f|\neq -a\}$. Moreover, $A^{\star}$ is a graded $A$-module. Also, since $A=\oplus_{a\in G}A_a$ and $A^{\star}=\oplus_{a\in G}A^{\star}_a$ are $G$-graded spaces, the direct sum 
$$A\oplus A^{\star}=\oplus_{a\in G}(A\oplus A^{\star})_a=\oplus_{a\in G}(A_a\oplus A^{\star}_a),$$
is $G$-graded. Consider a homogeneous element of $A\oplus A^{\star}$ as $f+\alpha$ such that $f\in A$ and $\alpha\in A^{\star}$, with $|f+\alpha|=|f|=|\alpha|$.

Let $(A,[.,.,.]_A,\rho,\phi)$ be a 3-Hom-$\rho$-Lie algebra and $(V,\mu,\beta)$ be a representation of $A$. Let $V^{\star}$ be the dual vector space of $V$. We define the linear map $\widetilde{\mu}:A\times A\longrightarrow End(V^{\star})$ by $\widetilde{\mu}(f_1,f_2)(\varrho)=-\rho(f_1+f_2,\varrho)\varrho\circ\mu(f_1,f_2)$, where $f_1,f_2\in A,~~\varrho\in V^{\star}$ and set $\widetilde{\beta}(\varrho)=\varrho\circ\beta$.
\begin{proposition}
Let $(A,[.,.,.]_A,\rho,\phi)$ be a 3-Hom-$\rho$-Lie algebra and $(V,\mu,\beta)$ be a representation of $A$. Then the triple $(V^{\star},\widetilde{\mu},\widetilde{\beta})$ defines a representation of $A$ if and only if 
\begin{align*}
&(1).~~\beta(\mu[(f_1,f_2),(g_1,g_2)]_{\mathcal{L}})=\mu(f_1,f_2)\mu(\phi(g_1),\phi(g_2))-\rho(f_1+f_2,g_1+g_2)\mu(g_1,g_2)\mu(\phi(f_1),\phi(f_2)),\\
&(2).~~ \beta\mu([g_1,g_2,g_3],\phi(f))=-\rho(g_1+g_2,g_3+f)\mu(g_3,f)\mu(\phi(g_1),\phi(g_2))\\
&\qquad\qquad\qquad\qquad\qquad\qquad -\rho(g_2+g_3,g_1+f)\mu(g_1,f)\mu(\phi(g_2),\phi(g_3))\\
&\qquad\qquad\qquad\qquad\qquad\qquad-\rho(g_3+g_1,g_2+f)\mu(g_2,f)\mu(\phi(g_3),\phi(g_1)),\\
&(3).~~ \beta\mu(\phi(g),[f_1,f_2,f_3])=-\rho(f_1+f_2,f_3)\mu(g,f_3)\mu(\phi(f_1),\phi(f_2))-\mu(g,f_1)\mu(\phi(f_2),\phi(f_3))\\
&\qquad\qquad\qquad\qquad\qquad\qquad-\rho(f_1+f_2,f_3)\rho(f_1+f_3,f_2)\mu(g,f_2)\mu(\phi(f_3),\phi(f_1)),\\
&(4).~~  \rho(f_1+f_2,g_1)\beta\mu(\phi(g_1),[f_1,f_2,g_2])+\beta\mu([f_1,f_2,g_1],\phi(g_2))=\mu(f_1,f_2)\mu(\phi(g_1),\phi(g_2))\\
&\qquad\qquad\qquad\qquad\qquad\qquad-\rho(f_1+f_2,g_1+g_2)\mu(g_1,g_2)\mu(\phi(f_1),\phi(f_2)).
\end{align*} 
\end{proposition}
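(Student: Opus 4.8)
The plan is to verify directly that $(V^{\star},\widetilde{\mu},\widetilde{\beta})$ satisfies the four defining axioms of a representation, namely \eqref{1}, \eqref{2}, the third (unlabelled) identity, and \eqref{7.1}, now written with $\widetilde{\mu}$ and $\widetilde{\beta}$ in place of $\mu$ and $\beta$, and to show that each of these four axioms is equivalent, after simplification, to the correspondingly numbered condition (1)--(4) in the statement. Since $\varrho\in V^{\star}$ is arbitrary and the four axioms will be matched to the four conditions one-for-one, the ``if and only if'' follows.

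First I would record the single computational identity that drives everything. For homogeneous $a,b\in\mathcal{L}$ and homogeneous $\varrho\in V^{\star}$, unwinding the definition $\widetilde{\mu}(a)(\varrho)=-\rho(|a|,|\varrho|)\,\varrho\circ\mu(a)$ twice, and using that composing with $\mu(b)$ raises the degree of $\varrho$ by $|b|$, gives
$$\widetilde{\mu}(a)\widetilde{\mu}(b)(\varrho)=\rho(|a|,|b|)\,\rho(|a|,|\varrho|)\,\rho(|b|,|\varrho|)\,\varrho\circ\mu(b)\mu(a).$$
The essential point to watch is the \emph{reversal} of the order of the $\mu$-factors (because $\widetilde{\mu}$ acts on functionals by right composition); this is exactly why the right-hand sides of (1)--(4) carry $\mu$ in the reversed order relative to \eqref{1}--\eqref{7.1}.

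Next, for each axiom I would substitute this identity and expand the left-hand side as $\widetilde{\mu}(L)\circ\widetilde{\beta}(\varrho)=-\rho(|L|,|\varrho|)\,\varrho\circ\beta\circ\mu(L)$, where $L$ is the relevant bracket argument and I use that $\beta$ (hence $\phi$) is even so degrees are preserved. The several $|\varrho|$-dependent characters on the right then recombine, via bimultiplicativity in the first slot $\rho(x,|\varrho|)\rho(y,|\varrho|)=\rho(x+y,|\varrho|)$, into the single common factor $\rho(|L|,|\varrho|)$ already present on the left, while each crossing factor of the shape $\rho(x,y)\rho(y,x)=1$ is eliminated using the first two-cycle relation. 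After cancelling $\rho(|L|,|\varrho|)\,\varrho\circ(-)$, which is legitimate because $\varrho$ ranges over all homogeneous functionals, the first axiom collapses to condition (1), and I expect the remaining three axioms to collapse in the same mechanical way to conditions (2), (3) and (4) respectively.

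The work is entirely a bookkeeping of $\rho$-characters, so the main obstacle is organizational rather than conceptual: one must track the degree $|\varrho|$ through every term and confirm that all $|\varrho|$-factors assemble into the one common character $\rho(|L|,|\varrho|)$, and that every residual crossing character cancels by $\rho(x,y)\rho(y,x)=1$; if either bookkeeping step failed to close, the equivalence with (1)--(4) would not hold verbatim. A secondary point worth stating once is the order reversal of the $\mu$-factors, since getting it wrong would produce conditions with the two arguments of each $\mu$ transposed.
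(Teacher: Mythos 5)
Your proposal is correct and takes essentially the same route as the paper's own proof: the paper likewise unwinds $\widetilde{\mu}$ and $\widetilde{\beta}$ on a functional $\varrho$, recombines the $|\varrho|$-characters via bimultiplicativity, cancels crossing factors with $\rho(x,y)\rho(y,x)=1$ (producing exactly the order-reversal of the $\mu$-factors you emphasize), verifies only the first axiom/condition pair explicitly, and dismisses the remaining cases and the converse as similar/straightforward. Your standalone composition identity $\widetilde{\mu}(a)\widetilde{\mu}(b)(\varrho)=\rho(|a|,|b|)\rho(|a|,|\varrho|)\rho(|b|,|\varrho|)\,\varrho\circ\mu(b)\mu(a)$ is just a cleaner packaging of the inline computation the paper performs, and your equivalence-by-equivalence framing handles both directions at once where the paper treats them separately.
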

\begin{proof}
Assuming that the conditions (1)-(4) hold. 
We prove that $\widetilde{\mu}$ is a representation of $A$, thus we must check the properties \eqref{1}-\eqref{7.1} for $(\widetilde{\mu},\widetilde{\beta},V^{\star})$. For this, we check the property \eqref{1} and the others will prove similarly. Using (1), we start with the computation of the left-hand side of \eqref{1}:
\begin{align*}
\widetilde{\mu}[(f_1,f_2), (g_1,g_2)]_{\mathcal{L}}\widetilde{\beta}(\varrho)(m)&=-\rho(f_1+f_2+g_1+g_2,\varrho)(\varrho\circ\beta)(\mu[(f_1,f_2), (g_1,g_2)]_{\mathcal{L}})\\
&=\rho(f_1+f_2+g_1+g_2,\varrho)\rho(f_1+f_2,g_1+g_2)\varrho\mu(g_1,g_2)\mu(\phi(f_1),\phi(f_2))(m)\\
&\ \ \ -\rho(f_1+f_2+g_1+g_2,\varrho)\varrho\mu(f_1,f_2)\mu(\phi(g_1),\phi(g_2))(m)\\
&=\widetilde{\mu}(\phi(f_1),\phi(f_2))\widetilde{\mu}(g_1,g_2)\varrho(m)\\
&\ \ \ -\rho(f_1+f_2,g_1+g_2)\widetilde{\mu}(\phi(g_1),\phi(g_2))\widetilde{\mu}(f_1,f_2)\varrho(m).
\end{align*}
Therefore
\begin{align*}
\widetilde{\mu}[(f_1,f_2), (g_1,g_2)]_{\mathcal{L}}\circ\widetilde{\beta}&=\widetilde{\mu}(\phi(f_1),\phi(f_2))\widetilde{\mu}(g_1,g_2)\\
&\ \ \ -\rho(f_1+f_2,g_1+g_2)\widetilde{\mu}(\phi(g_1),\phi(g_2))\widetilde{\mu}(f_1,f_2).
\end{align*}
The proof of the converse is also straightforward.
\end{proof}
\begin{corollary}
Let $(A,[.,.,.]_A,\rho,\phi)$ be a 3-Hom-$\rho$-Lie algebra with the adjoint representation and $A^{\star}$ be the dual of $A$. The linear map $ad^{\star}:A\times A\longrightarrow {\rm End(A^{\star})}$ defined by $ad^{\star}(f_1,f_2)(\varrho)(f_3)=-\rho(f_1+f_2,\varrho)\varrho[f_1,f_2,f_3]_A=-\rho(f_1+f_2,\varrho)\varrho(ad(f_1,f_2)(f_3)$ for $f_1,f_2,f_3\in A$ and $\varrho\in A^{\star}$ is a representation of $A$ that is called coadjoint representation.
\end{corollary}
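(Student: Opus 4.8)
The key observation is that $ad^{\star}$ is nothing other than the dual $\widetilde{\mu}$ of the adjoint representation. In the Example the triple $(A,ad,\phi)$ was shown to be a representation of $A$; substituting $\mu=ad$, $\beta=\phi$ and $V=A$ (so that $V^{\star}=A^{\star}$) into the defining formula $\widetilde{\mu}(f_1,f_2)(\varrho)=-\rho(f_1+f_2,\varrho)\,\varrho\circ\mu(f_1,f_2)$ gives exactly $\widetilde{\mu}(f_1,f_2)(\varrho)(f_3)=-\rho(f_1+f_2,\varrho)\,\varrho[f_1,f_2,f_3]=ad^{\star}(f_1,f_2)(\varrho)(f_3)$, while $\widetilde{\beta}=\widetilde{\phi}$. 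Hence the statement is precisely the preceding Proposition applied to the adjoint representation, and by that Proposition it suffices to verify that conditions $(1)$--$(4)$ there hold for the choice $\mu=ad$, $\beta=\phi$.

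First I would record the two facts used throughout: the $\rho$-fundamental identity together with multiplicativity $\phi[f,g,h]=[\phi(f),\phi(g),\phi(h)]$ (equivalently $\phi\circ ad(f,g)=ad(\phi f,\phi g)\circ\phi$), and the cocycle relations $\rho(a+b,c)=\rho(a,c)\rho(b,c)$, $\rho(a,b)=\rho(b,a)^{-1}$. Each of $(1)$--$(4)$ is then checked by direct expansion. For condition $(1)$ I expand the left-hand side $\beta(ad[(f_1,f_2),(g_1,g_2)]_{\mathcal{L}}(f_3))$ using \eqref{123} and $ad(a,b)(c)=[a,b,c]$, push $\phi$ through the inner brackets by multiplicativity, and then, after reconciling the powers of $\phi$ so produced, convert the resulting two-term expression into $ad(f_1,f_2)ad(\phi g_1,\phi g_2)(f_3)-\rho(f_1+f_2,g_1+g_2)ad(g_1,g_2)ad(\phi f_1,\phi f_2)(f_3)$ by an application of the $\rho$-fundamental identity. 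Conditions $(2)$ and $(3)$ are the three-term, cyclic analogues and follow in the same way from the cyclic form of the fundamental identity, the additional work being the tracking of the $\rho$-prefactors $\rho(g_1,g_2+g_3)$, $\rho(g_1+g_2,g_3)$, etc. attached to the individual cyclic terms; condition $(4)$ is obtained by recombining the pieces already produced for $(1)$ and $(2)$.

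The main obstacle I anticipate is purely the bookkeeping, and it is concentrated in the powers of $\phi$. Because $\beta=\phi$ acts on the \emph{outside} in conditions $(1)$--$(4)$, multiplicativity forces higher powers of $\phi$ to surface as it is moved through the inner brackets, and these $\phi$-levels must be matched exactly against those produced by the fundamental identity; keeping the powers of $\phi$ aligned (using that $\phi$ is an algebra morphism, and invoking its regularity where needed) is the delicate step. A secondary source of care is the array of scalar factors: every transposition of homogeneous arguments and every splitting of a composite degree such as $f_1+f_2$ or $g_1+g_2$ must be resolved with the cocycle identities so that the coefficients on the two sides coincide, which is most laborious in the cyclic conditions $(2)$ and $(3)$. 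Once $(1)$--$(4)$ are verified, the corollary follows immediately from the Proposition, and the resulting representation is the coadjoint representation $ad^{\star}$.
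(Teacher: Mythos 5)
Your reduction is exactly the paper's (implicit) route: the paper offers no separate proof and treats the statement as an immediate consequence of the preceding Proposition, i.e.\ one observes $ad^{\star}=\widetilde{\mu}$ for the choice $\mu=ad$, $\beta=\phi$, $V=A$ (so $V^{\star}=A^{\star}$), and then verifies conditions $(1)$--$(4)$ for the adjoint representation using the $\rho$-fundamental identity, multiplicativity of $\phi$, and the two-cycle relations. Your plan matches this, and your remark that regularity of $\phi$ may be needed to reconcile the powers of $\phi$ arising on the two sides is a legitimate flag on a point the paper passes over in silence.
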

\begin{theorem}
Let $(A,[.,.,.]_A,\rho,\phi)$ be a 3-Hom-$\rho$-Lie algebra with the adjoint representation and $A^{\star}$ be the dual of $A$. Consider the linear map $\omega:A\times A\times A\longrightarrow A^{\star}$ and let $\widetilde{\mu}=ad^{\star}$. The $G$-graded space $A\oplus A^{\star}$ together with the bracket and linear map 
\begin{align*}
[f_1+\alpha_1,f_2+\alpha_2,f_3+\alpha_3]_{A\oplus A^{\star}}&=[f_1,f_2,f_3]_A+\omega(f_1,f_2,f_3)+\widetilde{\mu}(f_1,f_2)(\alpha_3)\\
&\ \  \ +\rho(f_1+f_2,f_3)\widetilde{\mu}(f_3,f_2)(\alpha_2)+\rho(f_1,f_2+f_3)\widetilde{\mu}(f_2,f_3)(\alpha_1),\\
\phi^{\prime}(f+\alpha)=\phi(f)+\alpha\circ\phi,\ \ \ \ \ \ \ &
\end{align*}
for all $f_i\in A$ and $\alpha_i\in A^{\star},~~~ i=1,2,3$, is a 3-Hom-$\rho$-Lie algebra if and only if $\omega$ is a 1-Hom-cocycle with respect to $\widetilde{\mu}$.
\end{theorem}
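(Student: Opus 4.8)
The plan is to verify directly the two defining axioms of a 3-Hom-$\rho$-Lie algebra for the quadruple $(A\oplus A^\star,[.,.,.]_{A\oplus A^\star},\rho,\phi')$, arranging the computation so that \emph{both} directions of the equivalence emerge at once: the only axiom that can fail is the $\rho$-fundamental identity, and the obstruction to its validity, read off in the $A^\star$-component, will be exactly $d_1\omega$. The grading axiom $(1)$ and the evenness of $\phi'$ are immediate. Taking $\omega$ to be a degree-zero cochain, the summand $\omega(f_1,f_2,f_3)$ lies in $A^\star_{|f_1|+|f_2|+|f_3|}$, the coadjoint action $\widetilde{\mu}=ad^\star$ respects the grading of $A^\star$ by construction, and the $A$-part of the bracket is the degree-additive bracket of $A$; hence $|[x_1,x_2,x_3]_{A\oplus A^\star}|=|x_1|+|x_2|+|x_3|$. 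Evenness of $\phi'$ holds because $\phi$ is even, so $\alpha\circ\phi$ sits in the same homogeneous component of $A^\star$ as $\alpha$.

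The heart of the proof is the $\rho$-fundamental identity. Writing $x_i=f_i+\alpha_i$ $(i=1,2)$ and $y_j=g_j+\gamma_j$ $(j=1,2,3)$, I would expand both sides of
\begin{align*}
[\phi'(x_1),\phi'(x_2),[y_1,y_2,y_3]] &= [[x_1,x_2,y_1],\phi'(y_2),\phi'(y_3)] \\
&\quad -\rho(f_1+f_2,g_1)[\phi'(y_1),[x_1,x_2,y_2],\phi'(y_3)] \\
&\quad +\rho(f_1+f_2,g_1+g_2)[\phi'(y_1),\phi'(y_2),[x_1,x_2,y_3]],
\end{align*}
in which every bracket is the new one, using the defining formula, and then project onto $A$ and onto $A^\star$. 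The $A$-component retains only the $[.,.,.]_A$-terms, and it is \emph{verbatim} the $\rho$-fundamental identity of $A$; since $A$ is a (multiplicative) 3-Hom-$\rho$-Lie algebra this component holds automatically and contributes nothing.

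The $A^\star$-component is where the two mechanisms separate, and I would sort its terms into two families. The first consists of the terms in which $\widetilde{\mu}$ acts on one of the dual vectors $\alpha_1,\alpha_2,\gamma_1,\gamma_2,\gamma_3$; collecting these according to which dual vector they carry, each collection reproduces one of the four defining identities of a representation---namely \eqref{1}, \eqref{2}, \eqref{7.1} and the unlabelled third identity---and therefore vanishes because $\widetilde{\mu}=ad^\star$ is a representation of $A$ (the preceding Corollary). The second family consists of the terms in which $\omega$ occurs, either as $\omega(\cdots)$ or composed as $\widetilde{\mu}(\cdots)\,\omega(\cdots)$; comparing these with the explicit formula for $d_1$ in Definition~\ref{1234}, they assemble into $d_1\omega(f_1,f_2,g_1,g_2,g_3)$ (possibly after clearing a nonzero two-cycle factor). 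Consequently the $A^\star$-component of the difference of the two sides equals $d_1\omega(f_1,f_2,g_1,g_2,g_3)$, so the fundamental identity holds on $A\oplus A^\star$ if and only if $d_1\omega=0$, i.e.\ if and only if $\omega$ is a $1$-Hom-cocycle with respect to $\widetilde{\mu}$.

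The step I expect to be the main obstacle is purely the $\rho$-coefficient bookkeeping. Each of the three $\widetilde{\mu}$-summands in the bracket, together with $\phi'$, decorates every term with products of two-cycle values, and these must be reconciled---via $\rho(a+b,c)=\rho(a,c)\rho(b,c)$ and $\rho(a,b)=\rho(b,a)^{-1}$---with the prefactors appearing in $d_1\omega$ and in the representation identities. Matching the $\omega$-terms to the coboundary formula sign-by-sign and coefficient-by-coefficient is the delicate point; once the coefficients are aligned, the displayed identity yields both implications simultaneously.
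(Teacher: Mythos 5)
Your proposal takes essentially the same approach as the paper: the paper's entire proof is the single assertion that the result follows from the representation conditions \eqref{1}--\eqref{7.1} and Definition \ref{1234}, which is exactly the decomposition you describe (the terms carrying dual vectors vanish by the representation identities for $\widetilde{\mu}=ad^{\star}$, and the remaining $\omega$-terms assemble into $d_1\omega$, giving both directions at once). Your write-up is in fact more detailed than the paper's own proof, which omits the computation entirely.
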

\begin{proof}
Using \eqref{2}-\eqref{7.1} and Definition \ref{1234}, the result easily follows.
\begin{definition}
The 3-hom-$\rho$-Lie algebra $(A\oplus A^{\star}, [.,.,.]_{A\oplus A^{\star}} , \phi^{\prime})$ is called the $T^{\star}_{\omega}$-extension of $(A, [.,.,.]_A, \phi)$.
\end{definition}
\end{proof}
\section{abelian extension of 3-Hom-$\rho$-lie algebra}
In this section, we discuss about extensions and abelian extensions of 3-Hom-$\rho$-Lie algebra $A$ and show that associated to any abelian extension, there is a
representation and a 2-cocycle. We assume that the 3-Hom-$\rho$-Lie algebra $A$ is multiplicative.
\begin{definition}
A sub-vector space $I\subseteq A$ is called a Hom subalgebra of $(A,[.,.,.]_A,\phi)$ if $[I,I,I]_A\subseteq I$ and $\phi(I)\subseteq I$, $I$ also is called a Hom ideal of $A$ if $\phi(I)\subseteq I$ and $[I,A,A]_A\subseteq I$. $I$ is said to be a Hom abelian ideal of $A$ if $[A,I,I]_A=0$.
\end{definition}
\begin{definition}
Let $(A,[.,.,]_A,\rho,\phi)$, $(V,[.,.,.]_V,\phi_V)$ and $(B,[.,.,.]_B,\psi)$ be three 3-Hom-$\rho$-Lie algebras and $i:V\longrightarrow B$, $p:B\longrightarrow A$ be homomorphisms. The sequence 
\begin{equation*}
\begin{tikzcd}
0\arrow{r} & V \arrow{r}{i} & B\arrow{r}{p} & A \arrow{r} & 0,
\end{tikzcd}
\end{equation*}
of 3-Hom-$\rho$-Lie algebras is a short exact sequence if ${\rm Im(i)}={\rm Ker(p)}$, ${\rm Ker(i)=0}$, ${\rm Im(p)}=A$ and $\phi_V(V)=\psi(V)$. In this case, $B$ is called an extension of $A$ by $V$ and denote it by $E_B$. Also, we call $B$ an abelian extension of $A$ if $V$ is a Hom abelian ideal of $B$ i.e., $[.,u,v]_B=0$
for all $u,v\in V$. A linear map $\delta:A\longrightarrow B$ is called a section of $p:B\longrightarrow A$ if $p\circ\delta=id_{A}$ and $\delta\circ\phi=\psi\circ\delta$.
\end{definition}
\begin{definition}
Two extensions $0\xrightarrow{} V\xrightarrow{i} B\xrightarrow{p} A\xrightarrow{} 0$ and 
$0\xrightarrow{} V\xrightarrow{j} B\xrightarrow{q} A\xrightarrow{} 0$ of 3-Hom-$\rho$-Lie algebra $A$ are equivalent if there exists a morphism $F:B\longrightarrow\tilde{B}$ of 3-Hom-$\rho$-Lie algebras such that the following diagram commutes: 
\begin{equation*}
\begin{tikzcd}
0\arrow{r} & V \arrow{r}{i}\arrow{d}{id}  & B\arrow{r}{p}\arrow{d}{F} & A \arrow{r}\arrow{d}{id} & 0\\
0\arrow{r} & V \arrow{r}{j} & \tilde{B}\arrow{r}{q}& A \arrow{r} & 0.
\end{tikzcd}
\end{equation*}
\end{definition}
\vspace{1 cm}
Let $B$ be an abelian extension of $A$ by $V$ and $\delta:A\longrightarrow B$ be a section. Define $\mu:\wedge^2A\longrightarrow{\rm End(V)}$ by 
\begin{equation}\label{t22}
\mu(f)(u)=\mu(f_1,f_2)(u)=[\delta(f_1),\delta(f_2),u]_B=ad(\delta(f))u,
\end{equation}
for all $f=(f_1,f_2)\in\wedge^2A, u\in V$.
\begin{proposition}
Let $(V,\phi_V)$, $(A,\phi)$ and $(B,\psi)$ be multiplication 3-Hom-$\rho$-Lie algebras and $B$ be an abelian extension of $A$ by $V$. Cosider $\mu$ with \eqref{t22}. Then, $(V,\phi_V,\mu)$ is a representation of $(A,\phi)$ and does not depend on the choice of the section $\delta$. Moreover, equivalent abelian extensions give the same representation.
\end{proposition}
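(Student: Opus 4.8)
The plan is to deduce everything from the $\rho$-fundamental identity in $B$, using only two structural facts about the abelian extension.

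First I would fix notation and record the reductions that do all the work. Identify $V$ with $\Ker(p)=\Image(i)$; since $i$ is a morphism of $3$-Hom-$\rho$-Lie algebras we have $\psi|_V=\phi_V$, and the section satisfies $p\circ\delta=\mathrm{id}_A$ and $\psi\circ\delta=\delta\circ\phi$. Because $p$ is a homomorphism and $p(u)=0$ for $u\in V$, the element $\mu(f_1,f_2)(u)=[\delta(f_1),\delta(f_2),u]_B$ lies in $\Ker(p)=V$, so $\mu$ really maps into $\End(V)$. Two reduction rules then follow: (a) by abelianness any bracket $[-,v,w]_B$ with $v,w\in V$ vanishes; (b) since $p\big([\delta(f_1),\delta(f_2),\delta(g)]_B\big)=[f_1,f_2,g]_A$, the defect $[\delta(f_1),\delta(f_2),\delta(g)]_B-\delta([f_1,f_2,g]_A)$ lies in $V$, hence may be dropped whenever the expression sits inside another bracket that already contains a factor from $V$ (the two $V$-entries make it vanish by (a)).

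Next I would verify the four representation axioms \eqref{1}, \eqref{2}, the third one and \eqref{7.1}, each from one specialization of the $\rho$-fundamental identity of $B$. For \eqref{1}, apply that identity to the arguments $\delta(f_1),\delta(f_2);\delta(g_1),\delta(g_2),u$: the left-hand side $[\psi\delta(f_1),\psi\delta(f_2),[\delta(g_1),\delta(g_2),u]_B]_B$ becomes $\mu(\phi_1(f_1,f_2))\mu(g_1,g_2)(u)$ (using $\psi\delta=\delta\phi$ and that the inner bracket is $\mu(g_1,g_2)(u)\in V$); the term carrying $[\delta(f_1),\delta(f_2),u]_B$ becomes $\rho(f_1+f_2,g_1+g_2)\mu(\phi_1(g_1,g_2))\mu(f_1,f_2)(u)$; and the two remaining terms collapse by rule (b) to $\mu([f_1,f_2,g_1],\phi(g_2))(\psi u)+\rho(f_1+f_2,g_1)\mu(\phi(g_1),[f_1,f_2,g_2])(\psi u)=\mu[(f_1,f_2),(g_1,g_2)]_{\mathcal{L}}(\beta u)$, which is exactly \eqref{1}; identity \eqref{7.1} is merely its rearrangement. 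For \eqref{2} and the third axiom I would instead place $u$ in a different slot of the fundamental identity (so that the inner bracket becomes $[\delta(g_1),\delta(g_2),\delta(g_3)]_B$, respectively sits in the middle position), and then use the $\rho$-skew-symmetry of $[.,.,.]_B$ to move the resulting $V$-valued factor $\mu(\cdot,\cdot)(u)$ into the last slot, producing exactly the compositions $\mu(\phi_1(\cdot,\cdot))\mu(\cdot,f)$ on the right-hand sides.

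For independence of the section, let $\delta'$ be another section; then $\delta(f)-\delta'(f)\in V$. Writing $\delta(f_i)=\delta'(f_i)+v_i$ with $v_i\in V$ and expanding $[\delta(f_1),\delta(f_2),u]_B$ by trilinearity, every term containing some $v_i$ carries two entries from $V$ and dies by rule (a), so $\mu=\mu'$. For equivalent extensions, let $F:B\to\tilde{B}$ realize the equivalence, so $F\circ i=j$, $q\circ F=p$ and $F\psi=\tilde\psi F$; under the identifications $F|_V=\mathrm{id}_V$. Then $\tilde\delta:=F\circ\delta$ is a section of $q$ (indeed $q\tilde\delta=qF\delta=p\delta=\mathrm{id}_A$ and $\tilde\delta\phi=F\delta\phi=F\psi\delta=\tilde\psi F\delta=\tilde\psi\tilde\delta$), and since $F$ is a morphism fixing $V$ pointwise, $\tilde\mu(f_1,f_2)(u)=[F\delta(f_1),F\delta(f_2),Fu]_{\tilde{B}}=F[\delta(f_1),\delta(f_2),u]_B=[\delta(f_1),\delta(f_2),u]_B=\mu(f_1,f_2)(u)$, the penultimate equality because $\mu(f_1,f_2)(u)\in V$. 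Together with section-independence this shows equivalent abelian extensions induce the same representation.

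The conceptual content is light; the one genuinely laborious step is the $\rho$/sign bookkeeping in the second paragraph — checking that the grading factors produced by the fundamental identity of $B$ and by each skew-symmetry rearrangement match, coefficient by coefficient, those displayed in \eqref{1}–\eqref{7.1}, and that every defect term from rule (b) indeed pairs with a $V$-entry so as to vanish. I would carry this out slot-by-slot, one axiom at a time.
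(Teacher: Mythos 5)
Your proposal is correct and follows essentially the same route as the paper: you specialize the $\rho$-fundamental identity of $B$ to arguments including $u\in V$ (placing $u$ in different slots for the different axioms), use abelianness of $V$ to kill the defect terms $[\delta(f_1),\delta(f_2),\delta(g)]_B-\delta([f_1,f_2,g]_A)\in V$, and prove section-independence and invariance under equivalence by observing that the relevant differences lie in $V$. The only differences are organizational: you make the reduction rules (a), (b) explicit and note that \eqref{7.1} is a rearrangement of \eqref{1}, whereas the paper checks only \eqref{1} and \eqref{2} and asserts the rest; both treatments defer the same $\rho$-coefficient bookkeeping.
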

\begin{proof}
At first, we show that $\mu$ is independent of the choice of $\delta$. If $\delta^{\prime}:A\longrightarrow B$ is another section, then 
$$p(\delta(f_i)-\delta^{\prime}(f_i))=f_i-f_i=0,$$
thus, $\delta(f_i)-\delta^{\prime}(f_i)\in V$. So $\delta^{\prime}(f_i)=\delta(f_i)+u$ for some $u\in V$. Since $[.,u,v]_B=0$ for all $u,v\in V$, we deduce that
\begin{align*}
[\delta^{\prime}(f_1),\delta^{\prime}(f_2),w]_B&=[\delta(f_1)+u,\delta(f_2)+v,w]_B\\
&=[\delta(f_1),\delta(f_2)+v,w]_B+[u,\delta(f_2)+v,w]_B\\
&=[\delta(f_1),\delta(f_2),w]_B+[\delta(f_1),v,w]_B+[u,\delta(f_2),w]_B+[u,v,w]_B\\
&=[\delta(f_1),\delta(f_2),w]_B.
\end{align*}
So, $\mu$ is independent of the choice of $\delta$. In the next, we show that $(V,\phi_V,\mu)$ is a representation of $(A,\phi)$. For this it is enough to check the conditions \eqref{1} and \eqref{2}. Let $\beta=\phi_V\in {\rm End(V)}$. By the third property of 3-Hom-$\rho$-Lie algebras, we have 
\begin{align*}
[\psi(\delta(f_1)),\psi(\delta(f_2)),[\delta(g_1),\delta(g_2),u]_B]_B&=[[\delta(f_1),\delta(f_2),\delta(g_1)]_B,\psi(\delta(g_2)),\psi(u)]_B\\
&\ \ \ +\rho(f_1+f_2,g_1)[\psi(\delta(g_1)),[\delta(f_1),\delta(f_2),\delta(g_2)]_B,\psi(u)]_B\\
&\ \ \ +\rho(f_1+f_2,g_1+g_2)[\psi(\delta(g_1)),\psi(\delta(g_2)),[\delta(f_1),\delta(f_2),u)]_B]_B.
\end{align*} 
Using \eqref{t22} and this fact that $\psi\circ\delta=\delta\circ\phi$, we have
\begin{align*}
\mu(\phi(f_1),\phi(f_2))\mu(g_1,g_2)u&=\mu[(f_1,f_2),(g_1,g_2)]_A\circ\phi_V(u)\\
&\ \ \ +\rho(f_1+f_2,g_1+g_2)\mu(\phi(g_1),\phi(g_2))\mu(f_1,f_2)u.
\end{align*}
Therefore
\begin{align*}
\mu[(f_1,f_2),(g_1,g_2)]_A\circ\phi_V(u)&=\mu(\phi(f_1),\phi(f_2))\mu(g_1,g_2)u\\
&\ \ \ -\rho(f_1+f_2,g_1+g_2)\mu(\phi(g_1),\phi(g_2))\mu(f_1,f_2)u.
\end{align*}
This gives us the condition \eqref{1}. In the continues, we try to prove the correctness of the condition \eqref{2}.\\
Since $\phi_V(V)=\psi(V)$, $\delta\circ\phi=\psi\circ\delta$, $[\delta(f_1),\delta(f_2),\delta(g_1)]_B-\delta[f_1,f_2,g_1]_A\in V$ and $V$ is abelian ideal then 
\begin{align}
\mu([f_1,f_2,g_1],\phi(g_2))\phi_V(u)&=[\delta[f_1,f_2,g_1],\delta(\phi(g_2)),\phi_V(u)]\nonumber\\
&=[[\delta(f_1),\delta(f_2),\delta(g_1)],\psi\circ\delta(g_2),\psi(u)].\label{231}
\end{align}
On the other hand, we have
\begin{align*}
[\psi(\delta(f_1)),\psi(u),[\delta(g_1),\delta(g_2),\delta(g_3)]_B]_B&=[[\delta(f_1),u,\delta(g_1)]_B,\psi(\delta(g_2)),\psi(\delta(g_3))]_B\\
&\ \ \ +\rho(f_1+u,g_1)[\psi(\delta(g_1)),[\delta(f_1),u,\delta(g_2)]_B,\psi(\delta(g_3))]_B\\
&\ \ \ +\rho(f_1+u,g_1+g_2)[\psi(\delta(g_1)),\psi(\delta(g_2)),[\delta(f_1),u,\delta(g_3)]_B]_B.
\end{align*}
By invoking \eqref{231} and this fact that $\delta\circ\phi=\psi\circ\delta$ and $\beta=\phi_V$, we conclude that
\begin{align*}
\rho(f_1+u,g_1+g_2+g_3)\mu(&[g_1,g_2,g_3],\phi(f_1))\phi_V(u)=\rho(f_1+u,g_1+g_2)\rho(f_1+u,g_3)\mu(\phi(g_1),\phi(g_2))\mu(g_3,f_1)u\\
& \ \ \ +\rho(f_1+u+g_1,g_2+g_3)\rho(f_1+u,g_1)\mu(\phi(g_2),\phi(g_3))\mu(g_1,f_1)u\\
&\ \ \ + \rho(f_1+u+g_2,g_3)\rho(g_1,g_3)\rho(f_1+u,g_1+g_2)\mu(\phi(g_3),\phi(g_1))\mu(g_2,f_1)u.
\end{align*}
So, this statements lead us to
\begin{align*}
\mu([g_1,g_2,g_3],\phi(f_1))\circ\phi_V&=\mu(\phi(g_1),\phi(g_2))\mu(g_3,f_1)\\
& \ \ \ +\rho(g_1,g_2+g_3)\mu(\phi(g_2),\phi(g_3))\mu(g_1,f_1)\\
&\ \ \ + \rho(g_1+g_2,g_3)\mu(\phi(g_3),\phi(g_1))\mu(g_2,f_1).
\end{align*}
Therefore, the result holds. At last, we investigate that equivalent abelian extension give the same representation. For this, suppose that $E_B$ and $E_{\tilde{B}}$ are equivalent abelian extensions presented by 
\begin{equation*}
\begin{tikzcd}
	0\arrow{r} & V \arrow{r}{i} & B\arrow{r}{p}& A \arrow{r} & 0\\
	0\arrow{r} & V \arrow{r}{j} & \tilde{B}\arrow{r}{q}& A \arrow{r} & 0,
\end{tikzcd}
\end{equation*}
 and $F:B\longrightarrow \tilde{B}$ is the 3-Hom-$\rho$-Lie algebra homomorphism, satisfying $F\circ i=j$, $q\circ F=p$. Choose linear sections $\delta$ and $\delta^{\prime}$ of $p$ and $q$. So, we obtain $q\circ F(\delta(f_i))=p\circ\delta(f_i)=f_i=q\circ\delta^{\prime}(f_i)$. Then, $F\circ\delta(f_i)-\delta^{\prime}(f_i)\in{\rm Ker(q)}\cong V$. Thus, we have
 $$[\delta(f_1), \delta(f_2),u]_B=[F\circ\delta(f_1),F\circ\delta(f_2),u]_{\tilde{B}}=[\delta^{\prime}(f_1),\delta^{\prime}(f_2),u]_{\tilde{B}}.
 $$
 Therefore, we get the result.
\end{proof}
\begin{proposition}
Let $\delta:A\longrightarrow B$ be a section of the abelian extension of $A$ by $V$. Define the map 
$$\omega(f_1,f_2,f_3)=[\delta(f_1),\delta(f_2),\delta(f_3)]_B-\delta([f_1,f_2,f_3]_A),$$
for all $f_1, f_2, f_3\in A$. Then $\omega$ is a 1-cocycle, where the representation $\mu$ is given by \eqref{t22}.
\end{proposition}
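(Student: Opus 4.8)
The plan is to read off the cocycle condition $d_1\omega=0$ directly from the $\rho$-fundamental identity of $B$, applied to the five elements $\delta(f_1),\delta(f_2),\delta(g_1),\delta(g_2),\delta(g_3)$. Since $\delta$ preserves the grading and every bracket is degree-preserving, $\omega$ is an even map with $|\omega(a,b,c)|=|a|+|b|+|c|$, so it lands in the correct homogeneous component of $V$. The whole argument rests on the single substitution $[\delta(a),\delta(b),\delta(c)]_B=\delta([a,b,c]_A)+\omega(a,b,c)$, which is merely the definition of $\omega$, together with $\psi\circ\delta=\delta\circ\phi$ (the section intertwines $\phi$ and $\psi$) and the relation $\mu(a,b)(u)=[\delta(a),\delta(b),u]_B$ coming from \eqref{t22}.

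First I would write the $\rho$-fundamental identity of $B$ in homogeneous form (left-hand side minus right-hand side equal to zero) on the tuple above. Each of its four triple brackets contains one inner bracket of three $\delta$-images; replacing that inner bracket by $\delta([\ldots]_A)+\omega(\ldots)$ splits every term into an all-$\delta$ part and a one-$V$-entry part. In the all-$\delta$ part the outer bracket again carries three $\delta$-images, so a second application of the definition of $\omega$ produces a pure term $\delta([\ldots]_A)$ together with an \emph{outer} $\omega$-correction such as $\omega(\phi(f_1),\phi(f_2),[g_1,g_2,g_3]_A)$ or $\omega([f_1,f_2,g_1]_A,\phi(g_2),\phi(g_3))$. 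In the one-$V$-entry part I would use the $\rho$-skew symmetry of $[.,.,.]_B$ to displace the $V$-entry into the last slot, turning the bracket into $\mu(\cdot,\cdot)$ acting on that $\omega$-value. Any bracket that would carry two $V$-entries vanishes because $V$ is an abelian ideal ($[.,u,v]_B=0$), which is exactly why no higher-order corrections survive.

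Finally I would collect terms by type. The four pure-$\delta$ terms assemble into $\delta$ applied to the $\rho$-fundamental identity of $A$ evaluated on $f_1,f_2,g_1,g_2,g_3$; since that identity holds in $A$, their sum is $\delta(0)=0$. The remaining four outer $\omega$-terms and four $\mu$-terms are precisely the eight summands of $d_1\omega(f_1,f_2,g_1,g_2,g_3)$ in the $n=2$ coboundary formula, so the vanishing of the $B$-identity forces $d_1\omega=0$, which is the assertion by Definition \ref{1234}. The main obstacle is purely the $\rho$-bookkeeping: each displacement of a $V$-entry to the last slot introduces a factor $\rho(|\omega\text{-value}|,\cdot)$, and each reordering of the two $\delta$-arguments inside $\mu$ contributes a factor from $\mu(a,b)=-\rho(|a|,|b|)\mu(b,a)$. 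These must be combined, using $\rho(a+b,c)=\rho(a,c)\rho(b,c)$ and $\rho(a,b)=\rho(b,a)^{-1}$, to reproduce exactly the coefficients $\rho(f_1+f_2,g_2+g_3)\rho(g_1,g_2+g_3)$ and $\rho(f_1+f_2,g_1+g_3)\rho(g_1+g_2,g_3)$ appearing in $d_1\omega$; checking these matches is the only genuinely computational step.
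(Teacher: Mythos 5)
Your proposal follows essentially the same route as the paper's own proof: both apply the $\rho$-fundamental identity of $B$ to the lifted elements $\delta(f_1),\delta(f_2),\delta(g_1),\delta(g_2),\delta(g_3)$, substitute $[\delta(a),\delta(b),\delta(c)]_B=\delta([a,b,c]_A)+\omega(a,b,c)$ together with $\mu$ from \eqref{t22}, the intertwining $\psi\circ\delta=\delta\circ\phi$, and the abelianness of $V$, and then identify the surviving $\omega$- and $\mu$-terms with the eight summands of $d_1\omega$. Your explicit observation that the pure-$\delta$ terms cancel via the fundamental identity of $A$ is only implicit in the paper, but it is the same argument.
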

\begin{proof}
Since $B$ is a 3-Hom-$\rho$-Lie algebra, we have
\begin{align}\label{t25}
[\psi(\delta(f_1)), \psi(\delta(f_2)), [\delta(g_1), \delta(g_2), \delta(g_3)&]_B]_B=[[\delta(f_1), \delta(f_2), \delta(g_1)]_B, \psi(\delta(g_2)), \psi(\delta(g_3))]_B\nonumber\\
&\ \ \  +\rho(f_1+f_2,g_1)[\psi(\delta(g_1)), [\delta(f_1), \delta(f_2), \delta(g_2)]_B, \psi(\delta(g_3))]_B\nonumber\\
&\ \ \  +\rho(f_1+f_2+g_1+g_2)[\psi(\delta(g_3)), \psi(\delta(g_3)), [\delta(f_1), \delta(f_2), \delta(g_3)]_B ]_B.
\end{align}
On the other hand, we have
\begin{align*}
[\psi(\delta(f_1)), \psi(\delta(f_2)), [\delta(g_1), \delta(g_2), \delta(g_3)&]_B]_B=[\psi(\delta(f_1)), \psi(\delta(f_2)), \omega(g_1, g_2, g_3)+\delta[g_1, g_2, g_3]_A]_B\\
&= \mu(\phi(f_1), \phi(f_2))\omega(g_1, g_2, g_3) +[\psi(\delta(f_1)), \psi(\delta(f_2)),\delta[g_1, g_2, g_3]_A]_B\\
&= \mu(\phi(f_1), \phi(f_2))\omega(g_1, g_2, g_3) +\omega(\phi(f_1), \phi(f_2), [g_1, g_2, g_3]_A)\\
&\ \ \ +\delta[\psi(\delta(f_1)), \psi(\delta(f_2)),[g_1, g_2, g_3]_A]_B.
\end{align*}
Similarly, the right hand side of \eqref{t25} is equal to
\begin{align*}
&\rho(f_1+f_2+g_1, g_2+g_3)\mu(\phi(g_2), \phi(g_3))\omega(f_1, f_2, g_1)+\omega([f_1, f_2, g_1]_A,\phi(g_2), \phi(g_3))\\
&\ \ \ +\delta[[f_1, f_2, g_1]_A, \phi(g_2), \phi(g_3)]_A+\rho(f_1+f_2, g_1)\rho(f_1+f_2+g_2, g_3)\rho(g_1, g_2)\mu(\phi(g_3), \phi(g_1))\omega(f_1, f_2, g_2)\\
&\ \ \ +\rho(f_1+f_2, g_1)\delta[\phi(g_1), [f_1, f_2, g_2]_A, \phi(g_3)]_A+\rho(f_1+f_2, g_1+g_2)
\mu(\phi(g_1), \phi(g_2))\omega(f_1, f_2, g_3)\\
&\ \ \ +\rho(f_1+f_2, g_1+g_2)\omega(\phi(g_1), \phi(g_2), [f_1, f_2, g_3]_A) +\rho(f_1+f_2, g_1+g_2)\delta[\phi(g_1), \phi(g_2), [f_1, f_2, g_3]_A]_A.
\end{align*}
So, we get
\begin{align*}
\omega(\phi(f_1), \phi(f_2), [g_1, g_2, g_3]_A) &+\mu(\phi(f_1), \phi(f_2))\omega(g_1, g_2, g_3)=\omega([f_1, f_2, g_1]_A,\phi(g_2), \phi(g_3))\\
&\ \ \ +\rho(f_1+f_2, g_1)\omega(\phi(g_1),[f_1, f_2, g_2]_A,\phi(g_3))\\
&\ \ \ +\rho(f_1+f_2, g_1+g_2)\omega(\phi(g_1),\phi(g_3), [f_1, f_2, g_3]_A)\\
&\ \ \ +\rho(f_1+f_2, g_1)\rho(f_1+f_2+g_2, g_3)\rho(g_1, g_3)
\mu(\phi(g_3), \phi(g_1))\omega(f_1, f_2, g_2)\\
&\ \ \ +\rho(f_1+f_2, g_1+g_2)
\mu(\phi(g_1), \phi(g_2))\omega(f_1, f_2, g_3)\\
&\ \ \ +\rho(f_1+f_2+g_1, g_2+g_3)\mu(\phi(g_2), \phi(g_3))\omega(f_1, f_2, g_1).
\end{align*}
Therefore, $\omega$ is a 1-cocycle.
\end{proof}
\section{Infinitesimal deformations of 3-Hom-$\rho$-Lie algebras}
In this section, we introduce infinitesimal deformations of 3-Hom-$\rho$-Lie algebras and define Hom-Nijenhuis operator of it.

Let $A$ be a 3-Hom-$\rho$-Lie algebra and $\omega:\wedge^3A\longrightarrow A$ be a morphism. Consider a $t$-parametrized family of linear operations
$$[f,g,h]_t=[f,g,h]_A+t\omega(f,g,h).$$
If $A$ with all the brackets $[.,.,.]_t$ endow regular 3-Hom-$\rho$-Lie algebra structure $(A, [.,.,.]_t, \rho,\phi)$ which is denoted by $A_t$, we
say that $\omega$ generates a $t$-parameter infinitesimal deformation of the 3-Lie colour algebra $A$.
\begin{theorem}
$\omega$ generates a $t$-parameter infinitesimal deformation of the 3-Hom-$\rho$-Lie algebra $A$ is equivalent to 
\begin{enumerate}
\item 
$\omega$ itself defines a 3-Hom-$\rho$-Lie algebra structure on $A$.
\item
$\omega$ is a 1-cocycle of $A$ with coefficients in the adjoint representation.
\end{enumerate}
\end{theorem}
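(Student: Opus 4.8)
The plan is to substitute the deformed bracket $[.,.,.]_t = [.,.,.]_A + t\omega$ directly into the defining axioms of a regular 3-Hom-$\rho$-Lie algebra and to read off, power of $t$ by power of $t$, exactly which conditions on $\omega$ are forced. The degree axiom and the $\rho$-skew symmetry of $[.,.,.]_t$ hold automatically once $\omega$ is a degree-zero element of $\Hom(\wedge^3A,A)$, since both summands are degree-preserving and $\rho$-skew; likewise $\phi$ stays multiplicative for $[.,.,.]_t$ precisely when $\phi(\omega(f,g,h))=\omega(\phi(f),\phi(g),\phi(h))$, i.e. when $\omega$ is a Hom-cochain. Hence the whole content of the statement is concentrated in the $\rho$-fundamental identity, and I would devote the proof to analyzing that single equation.

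The key structural observation is that the $\rho$-fundamental identity is quadratic in the bracket: every term is a composition of two trilinear brackets. Substituting $[.,.,.]_t=[.,.,.]_A+t\omega$ and using trilinearity expands each term into a polynomial of degree two in $t$, so the identity for $A_t$ splits into three equations, one for each of $t^0,t^1,t^2$. The $t^0$-coefficient is exactly the $\rho$-fundamental identity for $[.,.,.]_A$, which holds because $A$ is already a 3-Hom-$\rho$-Lie algebra and therefore imposes nothing. The $t^2$-coefficient is the $\rho$-fundamental identity with every bracket replaced by $\omega$; its vanishing is, together with the automatic degree/skew/multiplicativity facts above, precisely the assertion that $(A,\omega,\rho,\phi)$ is a 3-Hom-$\rho$-Lie algebra, which is (1). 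The $t^1$-coefficient collects all cross terms in which exactly one of the two nested brackets is $\omega$ and the other is $[.,.,.]_A$.

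It then remains to identify the vanishing of this $t^1$-coefficient with assertion (2), namely $d_1\omega=0$ for the adjoint representation $\mu=ad$. Writing out the cross terms, the four pieces in which $\omega$ is the \emph{outer} bracket reproduce verbatim the four $\omega(\cdots)$-summands of $d_1\omega$. Among the pieces in which $\omega$ is the \emph{inner} bracket, the one coming from the left-hand side is already in adjoint form, $[\phi(f_1),\phi(f_2),\omega(g_1,g_2,g_3)]_A=\mu(\phi(f_1),\phi(f_2))\omega(g_1,g_2,g_3)$, and matches the corresponding summand of $d_1\omega$ directly; the three coming from the right-hand side have the shape $[\omega(f_1,f_2,g_i),\phi(g_j),\phi(g_k)]_A$ with $\omega(f_1,f_2,g_i)$ sitting in an inner slot. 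To match these against the three remaining summands $-\rho(\cdots)\mu(\phi(g_j),\phi(g_k))\omega(f_1,f_2,g_i)$ of $d_1\omega$, one transports $\omega(f_1,f_2,g_i)$ into the first two slots of the bracket using the $\rho$-skew symmetry of $[.,.,.]_A$ across adjacent arguments, together with the cocycle relations $\rho(a+b,c)=\rho(a,c)\rho(b,c)$ and $\rho(a,b+c)=\rho(a,b)\rho(a,c)$.

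I expect this $\rho$-bookkeeping to be the main obstacle: each transposition of adjacent arguments produces a factor $-\rho(\cdot,\cdot)$, and one must track how the exponents accumulate on $|\omega(f_1,f_2,g_i)|=|f_1|+|f_2|+|g_i|$ and then collapse them, via the cocycle identities, into exactly the prefactors appearing in the definition of $d_1$ — this is the one place where signs and $\rho$-weights must be verified with care. Once the $t^1$-coefficient has been shown to coincide with $d_1\omega$, both directions follow at once: if $\omega$ generates a deformation then the $\rho$-fundamental identity holds for all $t$, forcing every coefficient to vanish and hence giving (1) from $t^2$ and (2) from $t^1$; conversely, (1) and (2) annihilate the $t^2$- and $t^1$-coefficients while the $t^0$-coefficient vanishes for free, so $A_t$ satisfies the $\rho$-fundamental identity for every $t$ and, with the degree, skew-symmetry and multiplicativity checks, is a regular 3-Hom-$\rho$-Lie algebra.
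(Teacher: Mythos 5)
Your proposal follows essentially the same route as the paper's own proof: the paper likewise substitutes $[.,.,.]_t=[.,.,.]_A+t\omega$ into the $\rho$-fundamental identity, expands both sides as polynomials in $t$, and reads off the $t^1$-coefficient as the 1-cocycle condition $d_1\omega=0$ for the adjoint representation and the $t^2$-coefficient as the $\rho$-fundamental identity for $\omega$ itself, yielding both directions of the equivalence at once. If anything, you are more careful than the paper, which does not discuss the automatic axioms (degree, $\rho$-skew symmetry, multiplicativity) and simply asserts, rather than verifies, the $\rho$-weight bookkeeping needed to match the $t^1$-terms of the form $[\omega(f_1,f_2,g_i),\phi(g_j),\phi(g_k)]_A$ with the summands $\mu(\phi(g_j),\phi(g_k))\omega(f_1,f_2,g_i)$ of $d_1\omega$.
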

\begin{proof}
Since $[.,.,.]_t$ endow $(A, [.,.,.]_t, \rho,\phi)$ a regular 3-Hom-$\rho$-Lie algebra structure, then, 
\begin{align*} 
[\phi(f_1),\phi(f_2),[g_1,g_2,g_3]_t]_t&=[[f_1,f_2,g_1]_t,\phi(g_2),\phi(g_3)]_t-\rho(f_1+f_2,g_1)[\phi(g_1),[f_1,f_2,g_2]_t,\phi(g_3)]_t\\
&\ \ \ +\rho(f_1+f_2, g_1+g_2)[\phi(g_1),\phi(g_2),[f_1,f_2,g_3]_t]_t.
\end{align*}
The left-hand side is equal to
\begin{align*}
[\phi(f_1),\phi(f_2),[g_1,g_2,g_3]_A]_A &+t\{\omega(\phi(f_1), \phi(f_2), [g_1, g_2, g_3]_A)+[\phi(f_1),\phi(f_2), \omega(g_1, g_2, g_3)]_A\}\\
& +t^2\omega(\phi(f_1), \phi(f_2), \omega(g_1, g_2, g_3)).
\end{align*}
The right-hand side too is equal to
\begin{align*}
&[[f_1, f_2, g_1]_A, \phi(g_1), \phi(g_3)]+t\omega([f_1, f_2, g_1]_A, \phi(g_2), \phi(g_3))+[t\omega(f_1,f_2,g_1),\phi(g_2),\phi(g_3)]_A\\
&+t^2\omega(\omega(f_1, f_2,g_1),\phi(g_2),\phi(g_3))+\rho(f_1+f_2, g_1)[\phi(g_1), [f_1, f_2,g_2]_A,\phi(g_3)]_A\\
&+\rho(f_1+f_2, g_1)t\omega(\phi(g_1),[f_1,f_2,g_2]_A,\phi(g_3))\\
&+\rho(f_1+f_2, g_1)[\phi(g_1), t\omega(f_1,f_2,g_2),\phi(g_3)]_A+\rho(f_1+f_2, g_1)t^2\omega(\phi(g_1),\omega(f_1,f_2,g_2),\phi(g_3))\\
&+\rho(f_1+f_2,g_1+g_2)[\phi(g_1),\phi(g_2),[f_1,f_2,g_3]_A]_A+\rho(f_1+f_2, g_1+g_2)t\omega(\phi(g_1),\phi(g_2),[f_1,f_2,g_3]_A)\\
&+\rho(f_1+f_2, g_1+g_2)[\phi(g_1),\phi(g_2),t\omega(f_1,f_2,f_3)]_A+\rho(f_1+f_2, g_1+g_2)t^2\omega(\phi(g_1),\phi(g_2),\omega(f_1,f_2,g_3)).
\end{align*}
Thus, we have
\begin{align*}
\omega(\phi(f_1), \phi(f_2), [g_1, g_2, g_3]_A) +[f_1,f_2,\omega(g_1,g_2,g_3)]_A&=\omega([f_1,f_2,g_1]_A, \phi(g_2), \phi(g_3))\\
&+\rho(f_1+f_2, g_1)\omega(\phi(g_1), [f_1,f_2,g_2]_A, \phi(g_3))\\
&+\rho(f_1+f_2, g_1+g_2)\omega(\phi(g_1),\phi(g_2), [f_1,f_2,g_3]_A)\\
&+[\omega(f_1,f_2,g_1),\phi(g_2),\phi(g_3)]_A\\
&+\rho(f_1+f_2,g_1)[\phi(g_1),\omega(f_1,f_2,g_2),\phi(g_3)]_A\\
&+\rho(f_1+f_2,g_1+g_2)[\phi(g_1), \phi(g_2), \omega(f_1, f_2, g_3)]_A,
\end{align*}
and 
\begin{align*}
\omega(\phi(f_1),\phi(f_2),\omega(g_1,g_2,g_3))&=\omega(\omega(f_1,f_2,g_1),\phi(g_2),\phi(g_3))+\rho(f_1+f_2,g_1)\omega(\phi(g_1),\omega(f_1,f_2,g_2),\phi(g_3))\\ &\ \ \ +\rho(f_1+f_2,g_1+g_2)\omega(\phi(g_1),\phi(g_3),\omega(f_1,f_2,g_3)).
\end{align*}
Therefore, $\omega$ defines a 3-Hom-$\rho$-Lie algebra structure on $A$ and $\omega$ is a 1-cocycle of $A$ with the coefficient in the adjoint representation.
\end{proof}
An infinitesimal deformation is said to be trivial if there exists a grade-preserving map $N:A\longrightarrow A$
such that for $T_t = id + tN: A_t \longrightarrow A$ the following relation holds
$$T_t[f_1, f_2, f_3]_t = [T_tf_1, T_tf_2, T_tf_3].$$
\begin{definition}
A linear operator $N:A\longrightarrow A$ is called a Hom Nijenhuis operator if 
\begin{align}\label{t123}
N^2[f_1, f_2, f_3] &= N[Nf_1, f_2, f_3] + N[f_1, Nf_2, f_3] + N[f_1, f_2, Nf_3]\nonumber\\
&\ \ \ -([Nf_1, Nf_2, f_3] + [Nf_1, f_2, Nf_3] + [f_1, Nf_2, Nf_3]).
\end{align}
If we define $[.,.,.]_N$ by 
$$[f_1,f_2,f_3]_N= [Nf_1, f_2, f_3] + [f_1, Nf_2, f_3] + [f_1, f_2, Nf_3] -N[f_1, f_2, f_3],$$
then \eqref{t123} is equivalent to
$$N[f_1, f_2, f_3]_N = [Nf_1, Nf_2, f_3] + [Nf_1, f_2, Nf_3] + [f_1, Nf_2, Nf_3].$$
\end{definition}
\begin{theorem}
Let $N$ be a Nijenhuis operator for $A$. Setting 
$$\omega(f_1, f_2, f_3) = [f_1,f_2,f_3]_N,$$
then $\omega$ is an infinitesimal
deformation of $A$. Furthermore, this deformation is a trivial one.
\end{theorem}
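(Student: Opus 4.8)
The plan is to reduce the statement to the preceding characterization of infinitesimal deformations: it suffices to prove that $\omega(f_1,f_2,f_3)=[f_1,f_2,f_3]_N$ is both a $1$-cocycle of $A$ with coefficients in the adjoint representation and itself a $3$-Hom-$\rho$-Lie bracket on $A$, and then to exhibit an explicit trivialization. Throughout I use that a Hom Nijenhuis operator $N$ is even and commutes with $\phi$, so that $\omega$ is a degree-zero, $\rho$-skew-symmetric trilinear map and the two-cycle factors attached to $\omega$ coincide with those attached to $[\cdot,\cdot,\cdot]$; I also use both forms of the Nijenhuis condition recorded in \eqref{t123}, namely the torsion identity for $N^{2}[\cdot,\cdot,\cdot]$ and its rewriting $N[f_1,f_2,f_3]_N=[Nf_1,Nf_2,f_3]+[Nf_1,f_2,Nf_3]+[f_1,Nf_2,Nf_3]$.

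For the cocycle condition I would argue by transport of structure, which avoids any direct manipulation. Since $N$ is even and commutes with $\phi$, the operator $T_t=\mathrm{id}+tN$ is, formally in $t$, an invertible $\phi$-equivariant linear map, and hence the pulled-back bracket $\langle x,y,z\rangle_t:=T_t^{-1}[T_tx,T_ty,T_tz]$, together with the same $\phi$, is a $3$-Hom-$\rho$-Lie bracket for every $t$. A short expansion shows that its first-order Taylor coefficient in $t$ equals $[Nx,y,z]+[x,Ny,z]+[x,y,Nz]-N[x,y,z]=[x,y,z]_N=\omega(x,y,z)$. Differentiating the $\rho$-fundamental identity of $\langle\cdot,\cdot,\cdot\rangle_t$ at $t=0$ is exactly the linearization of that quadratic identity, and produces the $1$-cocycle equation $d_1\omega=0$ for the adjoint representation; this is condition (2) of the preceding theorem, and notably it uses no Nijenhuis hypothesis beyond evenness and $\phi$-equivariance.

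The substantial step, which I expect to be the main obstacle, is condition (1): that $\omega=[\cdot]_N$ satisfies the $\rho$-fundamental identity. Here the transport argument is of no help, because the order-$t^{2}$ coefficient of the fundamental identity of $\langle\cdot,\cdot,\cdot\rangle_t$ mixes $\omega$ with the (unknown) second-order term of the pullback, so the Nijenhuis hypothesis must now be used genuinely. I would expand $\omega(\phi(f_1),\phi(f_2),\omega(g_1,g_2,g_3))$ by unfolding every inner and outer occurrence of $N$ through the definition of $[\cdot]_N$, and reduce each resulting iterated bracket by applying the $\rho$-fundamental identity of $[\cdot,\cdot,\cdot]$ to the slot carrying $N$. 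The terms in which two copies of $N$ accumulate on the $g$-entries are precisely those absorbed by the equivalent Nijenhuis identity $N[\cdot]_N=[Nf_1,Nf_2,f_3]+[Nf_1,f_2,Nf_3]+[f_1,Nf_2,Nf_3]$, while the single-$N$ terms regroup into $\omega(\omega(f_1,f_2,g_1),\phi(g_2),\phi(g_3))$ together with its two $\rho$-weighted cyclic companions. This is the graded three-ary Hom analogue of the classical fact that a Nijenhuis deformation of a Lie bracket is again a Lie bracket; the only real difficulty is the length of the cancellation and the careful tracking of the two-cycle coefficients, which are undisturbed because $N$ preserves degrees. With (1) and (2) in hand, the preceding theorem yields that $\omega$ generates a $t$-parameter infinitesimal deformation of $A$.

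For triviality I would take the trivializing operator to be $N$ itself, so $T_t=\mathrm{id}+tN$. On the left, $T_t[f_1,f_2,f_3]_t=(\mathrm{id}+tN)\big([f_1,f_2,f_3]+t[f_1,f_2,f_3]_N\big)$; the summand $N[f_1,f_2,f_3]$ contributed by $T_t$ cancels the $-N[\cdot,\cdot,\cdot]$ inside $[\cdot]_N$ at first order, leaving $[Nf_1,f_2,f_3]+[f_1,Nf_2,f_3]+[f_1,f_2,Nf_3]$, and the second-order coefficient $N[f_1,f_2,f_3]_N$ equals $[Nf_1,Nf_2,f_3]+[Nf_1,f_2,Nf_3]+[f_1,Nf_2,Nf_3]$ by the equivalent Nijenhuis identity. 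On the right, expanding $[T_tf_1,T_tf_2,T_tf_3]$ by trilinearity reproduces exactly these contributions in orders $t^{0},t^{1},t^{2}$. The one point requiring care is the residual cubic term $t^{3}[Nf_1,Nf_2,Nf_3]$ produced on the right but absent on the left: literal equality for all $t$ would force its vanishing, so the clean statement is that $T_t$ intertwines the two brackets through second order, or, equivalently, that $T_t$ is an isomorphism onto the all-orders pullback deformation $\langle\cdot,\cdot,\cdot\rangle_t$ whose leading term is precisely $\omega$. Either reading exhibits the infinitesimal deformation generated by $\omega$ as trivial, which completes the proof.
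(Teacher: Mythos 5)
Your overall route coincides with the paper's: show that $\omega=[\cdot,\cdot,\cdot]_N$ is a $1$-cocycle and defines a $3$-Hom-$\rho$-Lie structure, then trivialize with $T_t=\mathrm{id}+tN$; your expansions of $T_t[f_1,f_2,f_3]_t$ and of $[T_tf_1,T_tf_2,T_tf_3]$ are exactly the ones the paper writes down. Two differences are worth recording. First, for the cocycle condition the paper offers only ``by a direct calculation,'' whereas your transport-of-structure argument (differentiate the fundamental identity of the pullback bracket $T_t^{-1}[T_t\cdot,T_t\cdot,T_t\cdot]$ at $t=0$) is an actual proof, and it is correct because $N$ is even and commutes with $\phi$, so the pullback is again a $3$-Hom-$\rho$-Lie bracket whose linear term is $[\cdot,\cdot,\cdot]_N$; even more directly, within the paper's own formalism one has $[\cdot,\cdot,\cdot]_N=d_0N$ (view $N$ as a $0$-Hom-cochain for the adjoint representation and use $\rho$-skew symmetry to rewrite the three $\mu$-terms), so $d_1\omega=d_1d_0N=0$ with no computation at all. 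Second, for the $\rho$-fundamental identity of $\omega$ — the step you rightly identify as the substantial one — you give a strategy rather than a computation; but the paper does precisely the same (``follows easily by a direct calculation''), so on that point you and the paper are equally incomplete.

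The most valuable part of your proposal is the observation about the cubic term, and here you have found a genuine gap in the paper rather than introduced one yourself. The right-hand side $[T_tf_1,T_tf_2,T_tf_3]$ contains $t^3[Nf_1,Nf_2,Nf_3]$, while $T_t[f_1,f_2,f_3]_t$ is a polynomial of degree $2$ in $t$; the paper's own displayed expansion exhibits this cubic term and then asserts the equality $T_t[f_1,f_2,f_3]_t=[T_tf_1,T_tf_2,T_tf_3]$ regardless. Literal equality for all $t$ forces $[Nf_1,Nf_2,Nf_3]=0$, which does not follow from the Hom Nijenhuis condition \eqref{t123}, since \eqref{t123} only matches the $t^2$ coefficients. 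Your two proposed repairs — strengthening the definition of Hom Nijenhuis operator so that the cubic contribution is controlled (as is done in the $n$-Lie algebra literature), or reading triviality as agreement through order $t^2$, equivalently as an isomorphism onto the full pullback deformation — are both sensible; with the paper's stated definition of ``trivial'' the theorem needs one such amendment to be literally true, and given this cubic-term phenomenon you would also do well to double-check, rather than assume, that \eqref{t123} alone really suffices for the fundamental identity of $\omega$ in condition (1).
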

\begin{proof}
By a direct calculation, we can see $d_1\omega=0$, therefore $\omega$ is a 1-cocycle of
$A$ with the coefficients in the adjoint representation. We must check the $\rho$-fundamental identity for $\omega$, that is 
\begin{align*}
\omega(\phi(f_1), \phi(f_2), \omega(g_1, g_2, g_3)) &=\omega(\omega(f_1, f_2, g_1), \phi(g_2), \phi(g_3))
+\rho(f_1 + f_2, g_1)\omega(\phi(g_1), \omega(f_1, f_2, g_2), \phi(g_3))\\
&\ \ \ +\rho(f_1 + f_2, g_1 + g_2)\omega(\phi(g_1), \phi(g_2), \omega(f_1, f_2, g_3)).
\end{align*}
This identity follows easily by a direct calculation, using the $\rho$-fundamental identity for $A$ and this fact that $N$ is a Hom Nijenhuis operator for $A$. Suppose that $T_t=id + tN$, then
\begin{align*}
T_t[f_1, f_2, f_3]_t &=id + tN([f_1, f_2, f_3]_t)=id + tN([f_1,f_2,f_3]_A+t\omega(f_1,f_2,f_3))\\ &=[f_1, f_2, f_3] + t\omega(f_1, f_2, f_3) + tN([f_1, f_2, f_3] + t\omega(f_1, f_2, f_3))\\
&= [f_1, f_2, f_3] + t(\omega(f_1, f_2, f_3) + N[f_1, f_2, f_3]) + t2N\omega(f_1, f_2, f_3),
\end{align*}
and
\begin{align*}
[T_tf_1, T_tf_2, T_tf_3] &= [f_1 + tNf_1, f_2 + tNf_2, f_3 + tNf_3]\\
&= [f_1, f_2, f_3] + t([Nf_1, f_2, f_3] + [f_1, Nf_2, f_3] + [f_1, f_2, Nf_3])\\
&\ \ \ +t2([Nf_1, Nf_2, f_3] + [Nf_1, f_2, Nf_3] + [f_1, Nf_2, Nf_3])
+t3[Nf_1, Nf_2, Nf_3].
\end{align*}
Then, we have
$$T_t[f_1, f_2, f_3]_t =[T_tf_1, T_tf_2, T_tf_3],$$
which implies that the infinitesimal deformation is trivial.
\end{proof}

\bigskip \addcontentsline{toc}{section}{References}

\end{document}